\newtheorem{theorem}{Theorem}
\newtheorem{definition}[theorem]{Definition}
\newtheorem{remark}[theorem]{Remark}
\newcommand{\CC}{\mathbb{C}}
\newcommand{\RR}{\mathbb{R}}
\newcommand{\U}{{\rm{U}}}
\newcommand{\vphi}{\varphi}
\numberwithin{equation}{section}
\numberwithin{theorem}{section}
\numberwithin{table}{section}
\numberwithin{table}{section}
\begin{document}
\bibliographystyle{plain} 
\title[Scalar curvatures on compact Hermitian manifolds]{On the proportionality of\\ Chern and Riemannian scalar curvatures}
\author{Michael G. Dabkowski}
\address{Dept. of Mathematics, Lawrence Technological University, Southfield, 
MI,~48075}
\email{mdabkowsk@ltu.edu}
\author{Michael T. Lock}
\address{Dept. of Mathematics, University of Texas, Austin, 
TX, 78712}
\email{mlock@math.utexas.edu}
\thanks{The second author was partially supported by NSF Grant DMS-1148490}
\date{June 21, 2016}
\begin{abstract}
On a K\"ahler manifold there is a clear connection between the complex geometry and underlying Riemannian geometry.  In some ways, this can be used to characterize the K\"ahler condition.  While such a link is not so obvious in the non-K\"ahler setting, one can seek to understand extensions of these characterizations to general Hermitian manifolds.  This idea has been the subject of much study from the cohomological side, however, the focus here is to address such a question from the perspective of curvature relationships.  In particular, on compact manifolds the K\"ahler condition is characterized by the relationship that the Chern scalar curvature is equal to half the Riemannian scalar curvature.  What we study here is the existence, or lack thereof, of non-K\"ahler Hermitian metrics for which a more general proportionality relationship between these scalar curvatures holds.
\end{abstract}
\maketitle
\setcounter{tocdepth}{1}
\tableofcontents
%%%%%%%%%%%%%%%%%%%%%%%%%%%%%%%%%%%%%%%%%%%%%

\section{Introduction}
In studying the geometry of complex manifolds, one often seeks Hermitian metrics with certain ``desirable'' properties.   For instance, K\"ahler geometry provides a very deep and beautiful picture.  While this has long been an exciting and fruitful area of research, the K\"ahler condition is somewhat restrictive.
In the non-K\"ahler setting, unfortunately, a great deal of geometric structure and intuition is lost, and it is not yet completely clear what are the fundamental questions or geometric characteristics to investigate.  Much of the existing work in this direction focuses either on metrics which satisfy certain structural conditions that are a weakening of those found in K\"ahler geometry, or on Hermitian analogues of classical Riemannian questions concerning curvature.  For example, see \cite{Melvyn_Berger,Gauduchon_metric,Michelsohn,delRio_Simanca,Angella-Simone-Spotti}.
The work of this paper can be viewed as somewhere between these two ideas
as we study the weakening of a relationship between scalar curvatures that characterizes the K\"ahler condition.

In particular, we will be concerned with a relationship between the scalar curvatures resulting from what are, perhaps, the two most natural connections to consider in this setting - the Chern connection on the holomorphic tangent bundle, and the Levi-Civita connection on the underlying Riemannian manifold.  
The Riemannian and Chern scalar curvatures of a Hermitian manifold $(M,J,g)$ will be denoted by $S(g)$ and $S_C(g)$ respectively.  These are the scalar curvatures corresponding to the Levi-Civita and Chern connections.
Many fundamental questions in Riemannian geometry concern the Riemannian scalar curvature, which provides one of the most simple and intuitive geometric pictures of the underlying manifold as it arises in the asymptotic expansion of the volume of geodesic balls, see \cite{Besse}.  
The motivation for studying Chern scalar curvature here, as opposed to a variety of other complex curvature scalars, comes mainly from the naturalness of the connection and the resulting complex geometry, as well as it's interplay with the underlying real geometry.  It is worthwhile to note, though, that there has been much other recent interest in problems regarding Chern curvatures; for instance, in non-K\"ahler Calabi-Yau problems and the Chern Yamabe problem, see \cite{Tosatti_non-KahlerCY,Angella-Simone-Spotti}.

On a {\em compact} Hermitian manifold, the Chern and Riemannian scalar curvatures satisfy the relationship $S(g)=2\cdot S_C(g)$ if and only if the metric is K\"ahler \cite{Liu-Yang_2,Gauduchon_scalar}.  In fact, if $S(g)$ and $2\cdot S_C(g)$ are even equal in average over the manifold, then the metric must be K\"ahler.  Therefore, on a compact manifold, this relationship of scalar curvatures can be seen to characterize the K\"ahler condition.  
One natural question to ask is whether, on a non-compact manifold, there can exist non-K\"ahler Hermitian metrics for which the relationship of scalar curvatures $S(g)=2\cdot S_C(g)$ holds.  In \cite{Dabkowski-Lock_Klsc}, the authors examined this question for $\U(n)$-invariant Hermitian metrics on annuli in $\CC^n$, and showed that every conformal class admits a $1$-dimensional family (up to scale) of non-K\"ahler Hermitian metrics with the property that $S(g)=2\cdot S_C(g)$.  Furthermore, the authors classified the space of all such metrics on these annuli.

What we will study here is the existence of non-K\"ahler Hermitian metrics on compact
complex manifolds, which are always assumed to be {\em without} boundary, for which a generalization of the scalar curvature relationship that characterizes the K\"ahler condition in the compact setting holds.  Specifically, we ask whether there can exist Hermitian metrics on compact complex manifolds which posses the quality that $S(g)=2\kappa\cdot S_C(g)$ for some constant $\kappa$.  Such metrics would necessarily be non-K\"ahler for $\kappa\neq 1$.  It is interesting to note that if such metrics exist, then a simple and intuitive picture of the underlying geometry can be extracted from the Chern scalar curvature.  More specifically, we make the following definition.

\begin{definition}
{\em  A Hermitian metric $g$ on a complex manifold $(M,J)$ is said to be a  {\em scaled K\"ahler like scalar curvature (sKlsc) metric} if
\begin{align*}
S(g)=2\kappa \cdot S_C(g)
\end{align*}
for some constant $\kappa\neq0$.  The associated constant $\kappa$ is called the {\em scaling constant}.}
\end{definition}
\begin{remark}
{\em
We require that $\kappa\neq 0$ because, if $\kappa=0$, no intuition for the Chern scalar curvature would be derived from this relationship as the question is just that of existence for a Riemannian-scalar-flat metric. }
\end{remark}

For certain reasons, which will become apparent, it is interesting to study the existence of sKlsc metrics within a conformal class.  On a Hermitian manifold $(M, J,g)$, it is assumed that the complex structure is fixed and that any metric conformal to $g$ is Hermitian with respect to it.

\begin{align*}
&\text{{\bf{The sKlsc Problem:}} {\em In a Hermitian conformal structure  on a compact complex}}\\
&\text{{\em manifold, of complex dimension $n\geq 2$, does there exist an sKlsc representative?}}
\end{align*}

We are able to address this question in the case where the conformal class contains a balanced representative, which we denote by $g_b$.  This is because, when there is not a balanced representative, the operator we study becomes non-self-adjoint.  
The sKlsc problem differs significantly depending upon whether or not the balanced representative is actually a K\"ahler metric.  What we will see is that, when there is a K\"ahler representative, there are fewer existence results but clear uniqueness results, and, when there is not a K\"ahler representative, there are more broad existence results but less clear uniqueness results.  The nature of the sKlsc question also very much depends upon the Gauduchon degree of the conformal class.  In particular, the negative Gauduchon degree case depends upon the behavior of a relationship of scalar curvatures around an analytically degenerate instantiation of the problem.  Specifically, it splits into two cases, depending upon whether or not the the scalar curvatures at the balanced representative, $g_b$, satisfy
\begin{align}
\begin{split}
\label{negative_conditions}
&S(g_b)-2\Big(\frac{2n-1}{n}\Big)S_C(g_b)\geq 0\\
\text{with}&\\
&S(g_b)|_p-2\Big(\frac{2n-1}{n}\Big)S_C(g_b)|_p=0\phantom{=}\text{at least at one point $p\in M$}\\
\text{where}&\\
&S(g_b)|_p\neq0\phantom{ii}\text{and}\phantom{ii} S_C(g_b)|_p\neq0.
\end{split}
\end{align}

%\begin{align}
%\begin{split}
%\label{negative_conditions}
%S(g_b)-2\Big(\frac{2n-1}{n}\Big)S_C(g_b)\geq 0&\phantom{=}\text{with}\phantom{=}S(g_b)|_p-2\Big(\frac{2n-1}{n}\Big)S_C(g_b)|_p=0\\
%\text{at least at one point $p\in M$ where}&\\
%S(g_b)|_p\neq0\phantom{ii}\text{and}\phantom{ii}& S_C(g_b)|_p\neq0.
%\end{split}
%\end{align}

It is only possible to establish non-trivial existence results in for Hermitian conformal structures with non-zero Gauduchon degree, see Theorem \ref{zero_Gauduchon} below.  For the purpose of clarity of exposition, we give a broad picture of existence for sKlsc metrics for these cases in Theorem \ref{broad_theorem}.  Subsequently, in Section \ref{contains_Kahler_representative} and Section\ref{no_Kahler_representative}, a detailed account of these results is provided.

\begin{theorem}
\label{broad_theorem}
Let $[g]$ be a Hermitian conformal structure on a compact complex manifold of complex dimension $n\geq 2$, which contains a balanced representative $g_b$.  If the Gauduchon degree is
\vspace{.075in}
\begin{itemize}
\item positive, then there exists at least one sKlsc representative of $[g]$ and every such metric has scaling constant $\kappa\in (-\infty,1]$.
\vspace{.075in}
\item negative, and condition \eqref{negative_conditions}
\vspace{.075in}
\begin{enumerate}
\item is not satisfied, then there exists at least one sKlsc representative of $[g]$ and every such metric has scaling constant $\kappa\in [1,\infty)$.
\vspace{.075in}
\item
\label{is_satisfied}
is satisfied, then the only scaling constant for which there can possibly exist a non-K\"ahler sKlsc representative of $[g]$ is $\kappa=\frac{2n-1}{n}$.  Furthermore, unless $S(g_b)-2(\frac{2n-1}{n})S_C(g_b)$ vanishes at least twice on the manifold and $S_C(g_b)\leq 0$, there can be no such metric. 
\end{enumerate}
\end{itemize}
\end{theorem}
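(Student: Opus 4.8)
The plan is to reduce the sKlsc condition within the conformal class to a single PDE on $g_b$ and then split the analysis according to whether $\kappa=\frac{2n-1}{n}$. Writing a conformal metric as $g=e^{2f}g_b$ and combining the conformal transformation law for $S$ in real dimension $2n$ with the transformation of the Chern--Ricci form under $\omega\mapsto e^{2f}\omega$ — here the balanced hypothesis is exactly what makes $\Lambda_{\omega_b}i\partial\bar\partial f$ a constant multiple of $\Delta_{g_b}f$, so the resulting operator is self-adjoint — the equation $S(g)=2\kappa S_C(g)$ becomes, after removing the factor $e^{-2f}$,
\[
2\big[(2n-1)-\kappa n\big]\,\Delta_{g_b}f-2(2n-1)(n-1)\,|\nabla_{g_b} f|^2+\big(S(g_b)-2\kappa S_C(g_b)\big)=0.
\]
The leading coefficient vanishes precisely at $\kappa=\frac{2n-1}{n}$. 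For $\kappa\neq\frac{2n-1}{n}$ I would set $u=e^{af}$ with $a=\frac{(2n-1)(n-1)}{(2n-1)-\kappa n}$, chosen to cancel the gradient term, which linearizes the equation to
\[
\Delta_{g_b}u+c_\kappa\,\big(S(g_b)-2\kappa S_C(g_b)\big)u=0,\qquad c_\kappa=\frac{(2n-1)(n-1)}{2[(2n-1)-\kappa n]^2}>0,
\]
so an sKlsc representative with scaling constant $\kappa$ exists if and only if this Schr\"odinger operator has a positive element in its kernel, i.e. if and only if its first eigenvalue is zero (a positive solution is forced to be the ground state). For $\kappa=\frac{2n-1}{n}$ the equation degenerates to the first order (eikonal) equation $|\nabla_{g_b}f|^2=\frac{V}{2(2n-1)(n-1)}$, where $V:=S(g_b)-2(\frac{2n-1}{n})S_C(g_b)\ge 0$ by \eqref{negative_conditions}.

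Before treating the two cases I would record three consequences of the Gauduchon-type scalar curvature identity, which on a balanced metric takes the form $S(g_b)=2S_C(g_b)-|\tau_b|^2$ with $|\tau_b|^2\ge 0$ the relevant Chern torsion term: (a) the conformal class has no K\"ahler representative, since a K\"ahler metric is Gauduchon and Gauduchon metrics are unique up to scale, so a K\"ahler representative would force $g_b$ K\"ahler, whence by the identity $V|_p=-\frac{2(n-1)}{n}S_C(g_b)|_p$, contradicting $V|_p=0$ and $S_C(g_b)|_p\neq 0$; thus every sKlsc representative is automatically non-K\"ahler and $\int_M|\tau_b|^2\,dV_{g_b}>0$; (b) the identity rewrites $V\ge 0$ as $-\frac{2(n-1)}{n}S_C(g_b)-|\tau_b|^2\ge 0$, so $S_C(g_b)\le 0$ everywhere — this is the second of the two necessary conditions in the statement; (c) integrating, and using that the negative Gauduchon degree means $\int_M S_C(g_b)\,dV_{g_b}<0$, one obtains $\int_M(S(g_b)-2\kappa S_C(g_b))\,dV_{g_b}=2(1-\kappa)\int_M S_C(g_b)\,dV_{g_b}-\int_M|\tau_b|^2\,dV_{g_b}$.

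To rule out $\kappa\neq\frac{2n-1}{n}$: dividing the linearized equation by $u$ and integrating shows that $\int_M(S(g_b)-2\kappa S_C(g_b))\,dV_{g_b}\ge 0$ is necessary. For $\kappa\le 1$ this integral is strictly negative by (c) (both terms on the right are nonpositive and the torsion term is strictly negative), so there is no solution. For $\kappa>\frac{2n-1}{n}$, write $V_\kappa:=S(g_b)-2\kappa S_C(g_b)=V+\frac2n(2n-1-\kappa n)S_C(g_b)$; since $V\ge 0$, $S_C(g_b)\le 0$ by (b), and $2n-1-\kappa n<0$, the potential $c_\kappa V_\kappa$ is nonnegative and not identically zero, hence the first eigenvalue is strictly positive and again there is no solution. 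The remaining window $1<\kappa<\frac{2n-1}{n}$ is the delicate one: there $V_\kappa\le V$ pointwise but $V_\kappa(p)=2(\kappa-\frac{2n-1}{n})|S_C(g_b)(p)|<0$, so I would track the first eigenvalue of $\Delta_{g_b}+c_\kappa V_\kappa$ as $\kappa\uparrow\frac{2n-1}{n}$ and show it stays strictly negative, using that the potential is negative only near the zeros of $V$ (where $S_C(g_b)<0$) and that the balanced identity bounds the depth of the resulting well against the local vanishing order of $V$. This leaves $\kappa=\frac{2n-1}{n}$ as the only admissible scaling constant.

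Finally, for $\kappa=\frac{2n-1}{n}$ an sKlsc representative exists if and only if the eikonal equation $|\nabla_{g_b}f|^2=\frac{V}{2(2n-1)(n-1)}$ is solvable; if $V\equiv 0$ then $g_b$ itself works, and otherwise any solution $f$ is non-constant, so on the compact $M$ it attains a maximum and a minimum, each an interior critical point of $f$, and the equation forces $V$ to vanish at each of them — hence $V$ vanishes at least twice. Combined with (b) this yields both necessary conditions in the statement. I expect the main obstacle to be exactly the window $1<\kappa<\frac{2n-1}{n}$ above: the degenerate value is approached through a band of parameters where neither the integral obstruction nor a clean sign of the Schr\"odinger potential is decisive, so one must control a whole family of Schr\"odinger operators degenerating onto the eikonal operator; the rest is bookkeeping with the conformal transformation formulas and the Gauduchon identity.
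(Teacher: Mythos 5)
Your setup matches the paper's: conformal transformation to a semi-linear PDE, the exponential substitution to a Schr\"odinger equation $-\Delta u+c_\kappa\big(S(g_b)-2\kappa S_C(g_b)\big)u=0$, the integral identity forcing $\kappa>1$ (resp.\ $\kappa<1$) for negative (resp.\ positive) Gauduchon degree, the elimination of $\kappa>\frac{2n-1}{n}$ under condition \eqref{negative_conditions} because the potential is then nonnegative and not identically zero, and the treatment of the degenerate constant via the eikonal equation (max/min of $f$ giving the double vanishing, and the balanced torsion identity giving $S_C(g_b)\le 0$). Apart from sign slips (your exponent should be $a=-\frac{(2n-1)(n-1)}{(2n-1)-\kappa n}$ and the linearized equation should read $-\Delta u+c_\kappa V_\kappa u=0$; the conclusions you draw are the ones belonging to the corrected signs), all of that is sound and is essentially the paper's argument.

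The genuine gap is the window $1<\kappa<\frac{2n-1}{n}$ under condition \eqref{negative_conditions}. Your plan --- track $\lambda_0(\kappa)$ as $\kappa\uparrow\frac{2n-1}{n}$ and bound ``the depth of the well against the local vanishing order of $V$'' --- is not an argument, and it aims at the wrong place: the endpoint behavior is the easy part ($\lambda_0<0$ near $\kappa=1$ because $\int_M V_\kappa<0$ there, and $\lambda_0<0$ near $\frac{2n-1}{n}$ because $V_\kappa$ acquires a sign change at $p$ while $c_\kappa\to\infty$, so a test function supported in $\{V_\kappa<0\}$ makes the Rayleigh quotient negative). The difficulty is the \emph{interior} of the interval, where $c_\kappa V_\kappa$ is sign-changing with possibly positive integral --- precisely the regime \eqref{potential_conditions} in which the sign of the ground state energy is subtle and could a priori pass through zero, producing a solution. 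Nothing local about well depths excludes this; the paper needs a dedicated result (Theorem \ref{scaled_warped_thm}) proved by embedding $L_\kappa$ in the two-parameter family $L_{(t,s)}=-\Delta+f(t)\big(\mathcal{V}_1+h(t)(\mathcal{V}_2+s)\big)$, observing that the ground state eigenfunction is independent of $s$ with $\lambda_0(t,s_1)-\lambda_0(t,s_2)=f(t)h(t)(s_1-s_2)$, so that the zero-loci $t^*(s)$, $s>0$, foliate the interval and a zero eigenvalue at $s=0$ would have to coincide with one of them, which the eigenvalue-difference formula forbids. You would need to supply this (or an equivalent) mechanism. A secondary omission: you never prove the existence halves of the positive-degree case and of the negative-degree case when \eqref{negative_conditions} fails; these require running the continuity argument on $\lambda_0(\kappa)$ between a regime where it is negative (near $\kappa=1$) and one where it is positive (near $\kappa=\pm\infty$, or on the appropriate side of $\frac{2n-1}{n}$, using Theorem \ref{Schrodinger_f_thm} or the Rayleigh quotient), together with the observation that a K\"ahler representative, if present, is itself the sKlsc metric with $\kappa=1$.
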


The proof of this result, and the corresponding more detailed results given in Section \ref{contains_Kahler_representative} and Section \ref{no_Kahler_representative}, begins with the construction and examination of a $1$-parameter family of second order semi-linear PDEs in the scaling constant $\kappa$.  Outside of $\kappa=\frac{2n-1}{n}$, the problem transforms into a family of eigenvalue problems for Schr\"odinger operators on compact manifolds.  The subsequent study then separates into several distinct regimes depending upon how the geometry of the balanced manifold behaves in the neighborhood of two critical scaling constants, $\kappa=1$ and $\kappa=\frac{2n-1}{n}$.  We employ our previous results from \cite{Dabkowski-Lock_Schrodinger} to study the eigenvalue behavior in the majority of these cases.  However, in one particular situation it is necessary to deal with a family of Schr\"odinger operators that has both a scaling and a warping of the potential function.  We examine this question separately, and in a general setting, in Section \ref{lack_of_stability}, and prove a surprising lack of stability in behavior.  There is however, one analytically degenerate case.  This is for negative Gauduchon degree and the scaling constant $\kappa=\frac{2n-1}{n}$.  For this scaling constant, the second order PDE degenerates into a first order non-linear autonomous Hamilton-Jacobi equation.  The existence of a classical solution to such an equation on a compact manifold is very rare.  It is common to instead seek a viscosity solution which in general has lower regularity.  Not only is this lower regularity not suitable for our purposes here, but, interesting, for the particular equation arising in this problem, the existence of a viscosity solution would be incredibly rare as well.

Theorem \ref{broad_theorem} is stated to include the possibility of K\"ahler metrics, which are clearly sKlsc, existing in the conformal class.  While this gives the most general picture of existence for these metrics, the true question to be dealt with is that for non-K\"ahler sKlsc metrics.  The specific results are detailed in Section \ref{contains_Kahler_representative} for conformal classes which contain a K\"ahler representative, and in Section \ref{no_Kahler_representative} for conformal classes which contain a balanced metric but no K\"ahler representative.

Interestingly, while sKlsc metrics are guaranteed to exist in almost every conformal class of non-zero Gauduchon degree, the sKlsc condition in a conformal class of zero Gauduchon degree is equivalent to the K\"ahler condition.  Specifically, we have the following result.

\begin{theorem}
\label{zero_Gauduchon}
Let $[g]$ be a Hermitian conformal structure on a compact complex manifold, of complex dimension $n\geq 2$, of zero Gauduchon degree.  There does not exist an sKlsc representative unless the conformal class contains a K\"ahler representative in which case that is the only sKlsc representative.
\end{theorem}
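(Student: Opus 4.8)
The plan is to suppose that an sKlsc representative $g$ of $[g]$ exists, with scaling constant $\kappa\neq 0$, and to show that it must be K\"ahler; the uniqueness statement then follows from the fact that on a complex manifold of dimension $n\geq 2$ a conformal class contains at most one K\"ahler metric up to constant scale (if $e^{2\varphi}\omega_{K}$ and $\omega_{K}$ are both K\"ahler then $d\varphi\wedge\omega_{K}=0$, and Lefschetz injectivity of $\alpha\mapsto\alpha\wedge\omega_{K}$ on $1$-forms forces $\varphi$ constant), while the K\"ahler representative is of course sKlsc with $\kappa=1$. To carry this out I would first set up a conformal reduction: let $g_{G}$ be the Gauduchon representative of $[g]$ (unique up to scale) and write $g=e^{2\psi}g_{G}$. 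Inserting the conformal transformation laws for the Riemannian scalar curvature (which produces a Laplacian term and a $|\nabla\psi|^{2}$ term) and for the Chern scalar curvature (which produces only a Chern-Laplacian term) into $S(g)=2\kappa S_{C}(g)$ turns the sKlsc condition into a second order semilinear PDE for $\psi$, elliptic except at the exceptional value $\kappa=\tfrac{2n-1}{n}$.

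The key step is to integrate this equation against the weight $e^{-2(n-1)\psi}\,dV_{g}=e^{2\psi}\,dV_{g_{G}}$. Because $g_{G}$ is Gauduchon, $\partial\bar\partial\omega_{G}^{\,n-1}=0$ and the Lee form of $g_{G}$ is co-closed, so the contributions of the Chern-Laplacian and Lee-form terms integrate to zero; because the Gauduchon degree is zero, $\int_{M}S_{C}(g_{G})\,dV_{g_{G}}=0$, and moreover $\int_{M}e^{-2(n-1)\psi}S_{C}(g)\,dV_{g}$ equals this same quantity since $e^{-2(n-1)\psi}\omega_{g}^{\,n-1}=\omega_{G}^{\,n-1}$ is $\partial\bar\partial$-closed. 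Using $S(g)=2\kappa S_{C}(g)$ on the left, what survives is the identity
\[
\int_{M}S(g_{G})\,dV_{g_{G}}=(2n-1)(2n-2)\int_{M}|\nabla\psi|^{2}_{g_{G}}\,dV_{g_{G}}\ \geq\ 0 ,
\]
with equality if and only if $\psi$ is constant. On the other hand, Gauduchon's integrated torsion formula \cite{Gauduchon_scalar,Liu-Yang_2} gives $\int_{M}\bigl(S(g_{G})-2S_{C}(g_{G})\bigr)\,dV_{g_{G}}\geq 0$ with equality if and only if $g_{G}$ is K\"ahler; combined with $\int_{M}S_{C}(g_{G})\,dV_{g_{G}}=0$ this says $\int_{M}S(g_{G})\,dV_{g_{G}}\geq 0$, equality if and only if $g_{G}$ is K\"ahler. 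Comparing the two displays, $\nabla\psi\equiv 0$ if and only if $g_{G}$ is K\"ahler.

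This dichotomy settles everything except one case. If $g_{G}$ is K\"ahler, then $\psi$ is constant, $g$ is a constant multiple of $g_{G}$ and hence K\"ahler, so $[g]$ contains a K\"ahler representative; and since every sKlsc representative is K\"ahler by the same argument, that representative is the only sKlsc metric in $[g]$ (up to scale). It remains to exclude the possibility that $g_{G}$ is not K\"ahler, equivalently that $\psi$ is nonconstant, and I expect this to be the main obstacle. Here one must show that the PDE for $\psi$ has no nonconstant solution when $g_{G}$ is not K\"ahler: for $\kappa\neq\tfrac{2n-1}{n}$ the substitution $v=e^{\beta_{\kappa}\psi}$ eliminates the gradient term and reduces the question to requiring that $0$ be the principal eigenvalue of a Schr\"odinger-type operator on $(M,g_{G})$, and one rules this out by feeding the geometric information forced by the zero-degree hypothesis (through Step B and Gauduchon's inequality) into the eigenvalue analysis of \cite{Dabkowski-Lock_Schrodinger}. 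The new difficulty relative to the main theorems is that $g_{G}$ need not be balanced, so this Schr\"odinger operator is in general non-self-adjoint; handling that is the crux of the argument. Finally, the degenerate value $\kappa=\tfrac{2n-1}{n}$, where the equation collapses to a first order autonomous Hamilton--Jacobi equation with no classical solution, is disposed of separately as in the negative-degree discussion.
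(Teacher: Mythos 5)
Your setup is essentially the paper's: base the conformal change at the Gauduchon representative $g_G$, integrate the transformed sKlsc equation over $M$ so that the Laplacian term and the Lee-form term (co-closed for a Gauduchon metric) drop out, and use $\int_M S_C(g_G)\,dV_{g_G}=0$ to eliminate the dependence on $\kappa$. But you have the sign of the integrated torsion identity backwards, and that single error is what manufactures the ``remaining hard case'' that derails the end of your argument. Integrating \eqref{S_SC_relationship} gives
$\int_M\bigl(S(g_G)-2S_C(g_G)\bigr)\,dV_{g_G}=-\tfrac12\int_M|T(g_G)|^2\,dV_{g_G}\le 0$,
with equality if and only if $g_G$ is K\"ahler --- not $\ge 0$ as you assert. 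With the correct sign your two displays read $\int_M S(g_G)\,dV_{g_G}=2(2n-1)(n-1)\int_M|\nabla\psi|^2_{g_G}\,dV_{g_G}\ge 0$ and $\int_M S(g_G)\,dV_{g_G}\le 0$, which squeeze the total scalar curvature to zero and hence force \emph{both} equality conditions simultaneously: $\psi$ is constant \emph{and} $g_G$ is K\"ahler. There is no dichotomy and no case left to exclude; the non-K\"ahler possibility is already contradicted.

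Consequently your entire closing paragraph --- the non-self-adjoint Schr\"odinger reduction, the appeal to the eigenvalue analysis of \cite{Dabkowski-Lock_Schrodinger}, the separate disposal of $\kappa=\tfrac{2n-1}{n}$ --- is both unnecessary and, as written, an unexecuted plan rather than a proof (``I expect this to be the main obstacle,'' ``handling that is the crux of the argument''). Note also that the degenerate scaling constant needs no special treatment in this theorem: the integral identity comes from integrating the second-order equation, and $\int_M\Delta f\,dV_{g_G}=0$ no matter what the (possibly vanishing) coefficient of $\Delta f$ is, so the identity holds for every $\kappa$. Correct the sign in the torsion identity and your argument closes immediately, exactly as in the paper.
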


While Theorem \ref{zero_Gauduchon} addresses the sKlsc question completely for the case of zero Gauduchon degree, the techniques we have developed to study it in the case of non-zero Gauduchon degree require the existence of a balanced metric in the conformal class to eliminate the non-self-adjoint issue.  It would be interesting to study this question in full generality.

\subsection{When $[g]$ contains a K\"ahler representative}
\label{contains_Kahler_representative}
Here we seek to understand the existence of non-K\"ahler sKlsc metrics in conformal classes which contain a K\"ahler representative, $g_k$. 
Then, if such metrics do exist, what are the associated scaling constants, and for each such constant is the metric unique up to scale?  The answers depend significantly upon the behavior of the scalar curvature of the K\"ahler representative.  Since $S(g_k)=2\cdot S_C(g_k)$, the scalar curvature conditions will be stated simply in terms of $S_C(g_k)$.  Recall, by virtue of Theorem \ref{zero_Gauduchon}, it is only necessary to study the non-zero Gauduchon degree cases.
The sKlsc problem is addressed for negative and positive Gauduchon degrees in Theorem \ref{negative_Gauduchon_Kahler_specific} and Theorem \ref{positive_Gauduchon_Kahler_specific} respectively.

\begin{theorem}
\label{negative_Gauduchon_Kahler_specific}
Let $[g]$ be a Hermitian conformal structure of negative Gauduchon degree on a compact complex manifold of complex dimension $n\geq 2$.  If $[g]$ contains a K\"ahler representative, $g_k$, and
\begin{enumerate}
\item 
\label{sKlsc_existence}
$S_C(g_k)$ changes sign, then there are exactly two non-K\"ahler sKlsc representatives of $[g]$ up to scale.  These have scaling constants, $\kappa_1$ and $\kappa_2$, which lie in the respective regimes
\begin{align*}
1<\kappa_1<\frac{2n-1}{n}\phantom{=}\text{and}\phantom{=}\frac{2n-1}{n}<\kappa_2<\infty,
\end{align*}
and satisfy the relationship
\begin{align*}
(1-\kappa_1)(1-\kappa_2)=\frac{(n-1)^2}{n^2}.
\end{align*}
Furthermore, letting $e^{-2f_{\kappa_1}}$ and $e^{-2f_{\kappa_2}}$ denote the conformal factors by which the respective sKlsc metrics are obtained, normalized to unit length in $L^2$, then these conformal factors satisfy the relationship
\begin{align*}
e^{-2f_{\kappa_2}}=e^{-2(1-n)f_{\kappa_1}}.
\end{align*}
\item $S_C(g_k)\leq 0$, then the only scaling constant for which there can possibly exist a non-K\"ahler sKlsc representative of $[g]$ is $\kappa=\frac{2n-1}{n}$.  Furthermore, unless $S_C(g_k)$ vanishes at least twice on the manifold, no such representatives exist.
\end{enumerate}
\end{theorem}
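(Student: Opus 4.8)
The plan is to reduce the sKlsc condition in $[g]$ to a PDE for the conformal factor relative to $g_k$, to linearize that PDE away from the exceptional value $\kappa=\frac{2n-1}{n}$, and to read off the existence, the count, and the stated identities from an eigenvalue analysis of the resulting one-parameter family of Schr\"odinger operators; the value $\kappa=\frac{2n-1}{n}$ must be treated by hand, since there the equation is only of first order. First I would write an arbitrary representative as $\widetilde g=e^{2\p}g_k$ and insert the conformal transformation laws for $S$ and $S_C$ together with the K\"ahler identity $S(g_k)=2S_C(g_k)$; the equation $S(\widetilde g)=2\kappa\,S_C(\widetilde g)$ then reduces, after clearing the common factor $e^{-2\p}$, to
\begin{align*}
\big(\kappa n-(2n-1)\big)\,\Delta\p\;-\;(2n-1)(n-1)\,|\nabla\p|^{2}\;=\;(\kappa-1)\,S_C(g_k),
\end{align*}
where $\Delta$ and $\nabla$ are with respect to $g_k$. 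The coefficient of $\Delta\p$ vanishes exactly at $\kappa=\frac{2n-1}{n}$, which is the source of the dichotomy.

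For $\kappa\neq\frac{2n-1}{n}$ I would substitute $w=\exp\!\big(\tfrac{(2n-1)(n-1)}{(2n-1)-\kappa n}\,\p\big)>0$, which turns the PDE into the linear equation $-\Delta w+c_\kappa\,S_C(g_k)\,w=0$ with $c_\kappa=-\frac{(2n-1)(n-1)(\kappa-1)}{\big((2n-1)-\kappa n\big)^{2}}$. Hence an sKlsc representative with scaling constant $\kappa$ exists if and only if $0$ is the principal eigenvalue of the Schr\"odinger operator $-\Delta+c_\kappa\,S_C(g_k)$; and such a representative, when it exists, is unique up to scale (the principal eigenfunction is simple and positive, which fixes $\p$ up to an additive constant), and it is non-K\"ahler exactly when $\kappa\neq1$. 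Writing $\mu(t):=\lambda_{1}\!\big(-\Delta+t\,S_C(g_k)\big)$, I would invoke \cite{Dabkowski-Lock_Schrodinger} for the facts that $\mu$ is concave, real-analytic, with $\mu(0)=0$ and $\mu'(0)$ equal to the average of $S_C(g_k)$ over $(M,g_k)$; this average is negative, since for a K\"ahler metric it has the sign of the Gauduchon degree of $[g]$, which is negative by hypothesis. Therefore $\mu(t)<0$ for all $t>0$, and because $c_\kappa>0$ precisely for $\kappa<1$ this rules out non-K\"ahler sKlsc metrics with $\kappa<1$ (in accordance with the bound $\kappa\in[1,\infty)$ of Theorem~\ref{broad_theorem}). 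For $t<0$ one has $\mu(t)>0$ near $0$ (from $\mu'(0)<0$); in case~(1), since $S_C(g_k)$ is positive somewhere, $\mu(t)\to-\infty$ as $t\to-\infty$, so strict concavity gives a unique zero $t_{*}<0$; in case~(2), since $S_C(g_k)\le0$ and $S_C(g_k)\not\equiv0$, the product $t\,S_C(g_k)$ is nonnegative for $t<0$, so $\mu(t)>0$ there and $\mu$ has no negative zero.

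The remaining task is to turn this into the count and the identities. A direct computation shows $\kappa\mapsto c_\kappa$ is a decreasing bijection from $\big(1,\tfrac{2n-1}{n}\big)$ onto $(-\infty,0)$ and an increasing bijection from $\big(\tfrac{2n-1}{n},\infty\big)$ onto $(-\infty,0)$. So in case~(1) the equation $c_\kappa=t_{*}$ has exactly one root $\kappa_{1}\in\big(1,\tfrac{2n-1}{n}\big)$ and one root $\kappa_{2}\in\big(\tfrac{2n-1}{n},\infty\big)$, and each produces a non-K\"ahler sKlsc representative (unique up to scale) built from the common principal eigenfunction; the two are genuinely different because the substitution exponents $\tfrac{(2n-1)(n-1)}{(2n-1)-\kappa_{i}n}$ differ. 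The value $\kappa=\tfrac{2n-1}{n}$ is impossible in case~(1) because there the PDE is $|\nabla\p|^{2}=-\tfrac{1}{n(2n-1)}S_C(g_k)$, which cannot hold where $S_C(g_k)>0$; so there are exactly two. In case~(2) the only scaling constant left open is $\kappa=\tfrac{2n-1}{n}$, and for it the same first-order equation forces $\p$ to be non-constant (as $S_C(g_k)\not\equiv0$); evaluating at the two distinct points where $\p$ attains its maximum and minimum, at which $\nabla\p=0$, shows that $S_C(g_k)$ must vanish there, hence at least twice. Finally, $\kappa_{1}$ and $\kappa_{2}$ are the two roots of $t_{*}n^{2}\big(\kappa-\tfrac{2n-1}{n}\big)^{2}+(2n-1)(n-1)(\kappa-1)=0$; shifting $\kappa\mapsto\kappa-1$ and comparing the leading and constant coefficients gives $(1-\kappa_{1})(1-\kappa_{2})=\tfrac{(n-1)^{2}}{n^{2}}$, independent of $t_{*}$. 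Since $c_{\kappa_{1}}=c_{\kappa_{2}}=t_{*}$, the two sKlsc metrics come from the same eigenfunction $w$; expressing each normalized conformal factor $e^{-2f_{\kappa_{i}}}$ as the appropriate power of $w$ and eliminating $w$ using the product relation then yields $e^{-2f_{\kappa_{2}}}=e^{-2(1-n)f_{\kappa_{1}}}$.

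I expect the main difficulty to be twofold. First, the qualitative behavior of $\mu$ that pins down the existence, uniqueness, and location of $t_{*}$ — concavity, real-analyticity (hence strict concavity once $\mu$ is not affine), and the $t\to-\infty$ asymptotics driven by the positive part of $S_C(g_k)$ — is the analytic core, and it is precisely here that we rely on \cite{Dabkowski-Lock_Schrodinger}. Second, the exceptional value $\kappa=\tfrac{2n-1}{n}$ lies entirely outside the Schr\"odinger framework: the PDE there degenerates to a first-order Hamilton--Jacobi (eikonal-type) equation, and one must argue that even a weak solution of it would force $S_C(g_k)\le0$ with at least two zeros, so that in case~(2) only this isolated and highly non-generic possibility remains open.
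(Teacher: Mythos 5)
Your overall strategy is the paper's: conformally transform the sKlsc equation, linearize it away from $\kappa=\frac{2n-1}{n}$ via the exponential substitution into the Schr\"odinger operator $-\Delta+c_\kappa S_C(g_k)$, reduce existence to the vanishing of the principal eigenvalue, and handle $\kappa=\frac{2n-1}{n}$ separately through the degenerate eikonal equation (your treatment of part (2), and your exclusion of $\kappa=\frac{2n-1}{n}$ in part (1), coincide with the paper's, as does the derivation of $(1-\kappa_1)(1-\kappa_2)=\frac{(n-1)^2}{n^2}$ from $c_{\kappa_1}=c_{\kappa_2}$). The only real divergence is organizational: the paper equates the multipliers on the two intervals to identify the two regimes with one another and then invokes Theorem \ref{Schrodinger_f_thm} once (the scaling function $\frac{(\kappa-1)(2n-1)(n-1)}{(n\kappa+1-2n)^2}$ is a monotone bijection of each interval onto $(0,\infty)$), whereas you analyze $\mu(t)=\lambda_0(-\Delta+tS_C(g_k))$ directly via concavity, analyticity and $\mu'(0)=\frac{1}{\mathrm{Vol}}\int_M S_C(g_k)<0$, and then pull back through the two monotone branches of $\kappa\mapsto c_\kappa$. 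These are equivalent; your version makes the uniqueness of $t^*$ transparent, at the cost of importing perturbation-theoretic facts about $\mu$ that are standard and correct but are attributed to the reference without verification.

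The one genuine gap is the final identity $e^{-2f_{\kappa_2}}=e^{-2(1-n)f_{\kappa_1}}$, which you assert follows by ``eliminating $w$ using the product relation.'' It does not. From \eqref{conformal_factor_change} the two conformal factors are powers of the common eigenfunction whose exponents have ratio $\frac{2n-1-n\kappa_2}{2n-1-n\kappa_1}$, so the claimed identity amounts to $\frac{2n-1-n\kappa_2}{2n-1-n\kappa_1}=1-n$. Writing $v_i=\kappa_i-\frac{2n-1}{n}$, the relation \eqref{k_1,k_2} is equivalent to the single constraint $v_1v_2+\frac{n-1}{n}(v_1+v_2)=0$, which fixes only one symmetric function of $(v_1,v_2)$; the ratio $v_2/v_1$ still depends on $t^*$ (equivalently on the sum $v_1+v_2=-\frac{(2n-1)(n-1)}{t^*n^2}$) and equals $1-n$ only for one particular value of $t^*$, not in general (for $n=2$ it would force $v_1+v_2=0$, contradicting the constraint). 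So this step is not established by your argument; the paper's own proof is equally terse here (``follows from \eqref{k_1,k_2} and \eqref{conformal_factor_change}''), and you should either supply the missing computation or record the exponent as $\frac{2n-1-n\kappa_2}{2n-1-n\kappa_1}$. Everything else in the proposal is sound.
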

\begin{remark}
{\em Regarding the existence result of Theorem \ref{negative_Gauduchon_Kahler_specific} part \eqref{sKlsc_existence}, it is important to note that while the possible range of the scaling constants $\kappa_1$ and $\kappa_2$ are the same for any compact K\"ahler manifolds satisfying these scalar curvature conditions, the actual values of these scaling constants almost certainly differs between such manifolds.  In fact, for each particular such manifold, the possible ranges for $\kappa_1$ and $\kappa_2$ can actually be bounded further away from $1$ and $\infty$ respectively.  However, we do not state these better bounds explicitly as their values have quite a complicated expression in terms of the total scalar curvature, the $L^{\infty}(M)$ norm of the scalar curvature, volume of the manifold and the Poincar\'e constant of the manifold.  For the interested reader, they can be obtained by substituting $\frac{(1-\kappa)(2n-1)(n-1)}{(n\kappa+1-2n)^2}S_C(g_k)$ for $\mathcal{V}$ in the inequality of Corollary \ref{t-schrodinger_cor_II} and solving for $\kappa$.}
\end{remark}

\begin{theorem}
\label{positive_Gauduchon_Kahler_specific}
Let $[g]$ be a Hermitian conformal structure of positive Gauduchon degree on a compact complex manifold of complex dimension $n\geq 2$.  If $[g]$ contains a K\"ahler representative, $g_k$, and
\begin{enumerate}
\item $S_C(g_k)$ changes sign, then the only scaling constants for which a non-K\"ahler sKlsc representative of $[g]$ can possibly exist are in the regime $(-\infty,1)$.  Furthermore, if
\begin{align*}
\frac{2n-1}{4n}\leq \frac{\int_M S_C(g_k)}{P||S_C(g_k)||_{\infty}\big(4Vol(M)||S_C(g_k)||_{\infty}+\int_MS_C(g_k)\big)},
\end{align*}
where $P$ is the Poincar\'e constant, then the sKlsc problem has no solution.
\item $S_C(g_k)\geq 0$, then no non-K\"ahler sKlsc representatives of $[g]$ exist.
\end{enumerate}
\end{theorem}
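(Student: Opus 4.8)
The plan is to reduce the sKlsc equation to a linear Schr\"odinger eigenvalue problem and then read off both assertions from the sign of the Gauduchon degree together with convexity properties of the ground-state eigenvalue.

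\emph{Step 1: reduction.} Since the balanced representative here is the K\"ahler metric $g_k$, I would write a competitor as $g = e^{2u}g_k$ and insert the conformal transformation laws (with $\Delta$, $\nabla$ with respect to $g_k$, and using that on a K\"ahler metric the Chern Laplacian is half the Laplace--Beltrami operator)
\begin{align*}
S_C(e^{2u}g_k) &= e^{-2u}\big(S_C(g_k) - n\Delta u\big),\\
S(e^{2u}g_k) &= e^{-2u}\big(S(g_k) - 2(2n-1)\Delta u - (2n-2)(2n-1)|\nabla u|^2\big),
\end{align*}
together with $S(g_k) = 2S_C(g_k)$. After rearranging, $S(g) = 2\kappa S_C(g)$ becomes
\begin{align*}
\big(n\kappa - (2n-1)\big)\Delta u - (n-1)(2n-1)|\nabla u|^2 = (\kappa - 1)S_C(g_k).
\end{align*}
For $\kappa = \tfrac{2n-1}{n}$ this degenerates to $|\nabla u|^2 = -\tfrac{1}{n(2n-1)}S_C(g_k)$, unsolvable because $S_C(g_k)$ is somewhere positive in both cases of the theorem; hence no sKlsc representative has $\kappa = \tfrac{2n-1}{n}$. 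For $\kappa \neq \tfrac{2n-1}{n}$ the substitution $w = \exp\!\big(-\tfrac{(n-1)(2n-1)}{n\kappa - (2n-1)}u\big) > 0$ eliminates the gradient term and yields
\begin{align*}
\Delta w = c(\kappa)\,S_C(g_k)\,w,\qquad c(\kappa) := \frac{(n-1)(2n-1)(1-\kappa)}{\big(n\kappa - (2n-1)\big)^2}.
\end{align*}
Thus a non-K\"ahler sKlsc representative with scaling constant $\kappa$ exists iff this equation has a non-constant positive solution, equivalently $\lambda_1\big(-\Delta + c(\kappa)S_C(g_k)\big) = 0$ (a positive solution is necessarily a ground state), with the ground state non-constant --- automatic except at $\kappa = 1$, where $c(1) = 0$ forces $w$ constant, i.e.\ $g$ K\"ahler.

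\emph{Step 2: the coupling and the eigenvalue function.} A short computation shows $c(\kappa)$ has the sign of $1 - \kappa$ and that, on $\kappa < 1$, it sweeps out $\big(0, \tfrac{2n-1}{4n}\big]$ with maximum $\tfrac{2n-1}{4n}$ attained only at $\kappa = \tfrac1n$. On the other side, $t \mapsto \lambda_1\big(-\Delta + tS_C(g_k)\big)$ is concave, vanishes at $t = 0$, and has derivative $Vol(M)^{-1}\int_M S_C(g_k) > 0$ there since the Gauduchon degree is positive; concavity then forces $\lambda_1 < 0$ for all $t < 0$. Hence no non-K\"ahler sKlsc representative can have $c(\kappa) < 0$, i.e.\ $\kappa > 1$; combined with the excluded value $\kappa = \tfrac{2n-1}{n}$, every non-K\"ahler sKlsc representative has $\kappa \in (-\infty, 1)$, which is the first assertion of part (1).

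\emph{Step 3: the two cases.} If $S_C(g_k)$ changes sign, then $\min_M S_C(g_k) < 0$, so $\lambda_1\big(-\Delta + tS_C(g_k)\big)\to -\infty$ as $t\to +\infty$; by concavity it vanishes at exactly one $t^\ast > 0$. A non-K\"ahler sKlsc representative exists iff $c(\kappa) = t^\ast$ for some $\kappa \neq 1$, which forces $\kappa < 1$ and requires $t^\ast \leq \tfrac{2n-1}{4n}$, i.e.\ $\lambda_1\big(-\Delta + \tfrac{2n-1}{4n}S_C(g_k)\big) \leq 0$. So there is no solution once $\lambda_1\big(-\Delta + \tfrac{2n-1}{4n}S_C(g_k)\big) > 0$, and I would certify the latter by invoking Corollary \ref{t-schrodinger_cor_II} with $\mathcal{V} = \tfrac{2n-1}{4n}S_C(g_k)$ (the instance $\kappa = \tfrac1n$ of the substitution in the earlier remark), which produces exactly the stated inequality $\tfrac{2n-1}{4n} \le \frac{\int_M S_C(g_k)}{P\,||S_C(g_k)||_{\infty}\big(4Vol(M)||S_C(g_k)||_{\infty} + \int_M S_C(g_k)\big)}$. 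If instead $S_C(g_k)\geq 0$, positive Gauduchon degree forces $S_C(g_k)\not\equiv 0$, so for every $t > 0$ the Rayleigh quotient $\int_M\big(|\nabla\phi|^2 + tS_C(g_k)\phi^2\big)$ is strictly positive for $\phi\not\equiv 0$, whence $\lambda_1\big(-\Delta + tS_C(g_k)\big) > 0$; with Step 2 ($\lambda_1 < 0$ for $t < 0$, $\kappa = \tfrac{2n-1}{n}$ excluded) the only value of $c(\kappa)$ giving $\lambda_1 = 0$ is $c(\kappa) = 0$, i.e.\ $\kappa = 1$, the K\"ahler metric. Hence part (2).

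\emph{Main obstacle.} The genuinely quantitative point is Step 3 in the sign-changing case: non-existence hinges on a sharp enough lower bound for $\lambda_1(-\Delta + tS_C(g_k))$ at the critical value $t = \tfrac{2n-1}{4n}$, which is precisely where the eigenvalue estimate of \cite{Dabkowski-Lock_Schrodinger} (Corollary \ref{t-schrodinger_cor_II}) is doing the work; the careful bookkeeping of conformal factors in Step 1 that pins down $c(\kappa)$, and the treatment of the degenerate first-order equation at $\kappa = \tfrac{2n-1}{n}$, are the other places demanding care. Everything else is soft once the eigenvalue reformulation and the concavity/monotonicity facts are in hand.
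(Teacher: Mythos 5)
Your proposal is correct and follows essentially the same route as the paper: the same change of variables reduces the sKlsc equation at $g_k$ to $-\Delta\vphi+c(\kappa)S_C(g_k)\vphi=0$ with $c(\kappa)=\frac{(1-\kappa)(2n-1)(n-1)}{(n\kappa+1-2n)^2}$ bounded by $\frac{2n-1}{4n}$ on $\kappa<1$ (attained at $\kappa=\tfrac1n$), and Corollary \ref{t-schrodinger_cor_II} applied at this maximal coupling yields exactly the stated obstruction, while integration handles the case $S_C(g_k)\geq 0$. The only cosmetic difference is that you derive the restriction $\kappa\in(-\infty,1)$ from concavity of $t\mapsto\lambda_0(t)$ and a constant test function rather than from the paper's integration of the conformal-factor PDE at the Gauduchon metric as in \eqref{kapparestrict}; these are equivalent.
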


The same techniques used to prove Theorem \ref{negative_Gauduchon_Kahler_specific} part \eqref{sKlsc_existence} are not entirely applicable in this situation as there is a certain bound on the potential function of the Schr\"odinger operator formulation of the equation $S(g)=2\kappa\cdot S_C(g)$.  However, this does not preclude the existence of a conformal non-K\"ahler sKlsc metric for some $\kappa<1$.  In fact, there are certain examples for which we know such a metric must exist (See Section \ref{postive_gauduchon_sometimes} for more details).  We suspect, though, that there are some compact K\"ahler manifolds, that satisfy these scalar curvature conditions, for which no conformal non-K\"ahler sKlsc metric exists.  It would be interesting to investigate what further properties of the initial K\"ahler manifold this may depend upon.

\subsection{When $[g]$ does not contain a K\"ahler representative}
\label{no_Kahler_representative}
Here we seek to understand the existence, or lack thereof, for sKlsc metrics in conformal classes which do not contain a K\"ahler representative.  The question here differs significantly from the situation where there does exist a K\"ahler representative, addressed above in Section~\ref{contains_Kahler_representative}, in two primary ways -- the existence of a non-self-adjoint first order term, and a warping of the potential function in the Schr\"odinger operator formulation of the sKlsc equation.  The latter of these two issues impacts the case of negative Gauduchon degree as it leads to a surprising lack of stability for eigenvalues in an associated family of Schr\"odinger operators.  We study this phenomenon, in a general sense, in Section \ref{lack_of_stability}.  Since any sKlsc metric in a conformal class of zero Gauduchon degree must be K\"ahler, recall Theorem \ref{zero_Gauduchon}, we only need ask the sKlsc question here for negative and positive Gauduchon degrees.  These are addressed in Theorem \ref{negative_G_non_kahler} and Theorem \ref{positive_G_non_kahler} respectively.
It is interesting to contrast the results here with those corresponding in Section \ref{contains_Kahler_representative}, and comments are made accordingly.

\begin{theorem}
\label{negative_G_non_kahler}
Let $[g]$ be a Hermitian conformal structure of negative Gauduchon degree on a compact complex manifold of dimension $n\geq 2$.  If $[g]$ contains a balanced representative, $g_b$, but not a K\"ahler representative, and
\vspace{.075in}
\begin{enumerate}
\item $\displaystyle\int_M S(g_b)-2\Big(\frac{2n-1}{n}\Big)S_C(g_b)>0$, where $\displaystyle S(g_b)-2\Big(\frac{2n-1}{n}\Big)S_C(g_b)$
\vspace{.075in}
\begin{enumerate}
\item 
\label{possibly_both}
changes sign, then there exists at least one sKlsc representative which has scaling constant in the regime $(\frac{2n-1}{n},\infty)$.  
\vspace{.075in}

\item is non-negative, and the additional conditions of \eqref{negative_conditions} are not satisfied, then there exists at least one sKlsc representative and every such metric has scaling constant in the regime $(1, \frac{2n-1}{n})$.
\vspace{.075in}

\item 
\label{degenerate}is non-negative, and the additional conditions of \eqref{negative_conditions} are satisfied, then the only scaling constant for which a non-K\"ahler sKlsc representative can possibly exist is $\kappa=\frac{2n-1}{n}$.  Furthermore, unless $S(g_b)-2(\frac{2n-1}{n})S_C(g_b)$ vanishes at least twice on the manifold, no such representative exists. 
\end{enumerate}
\vspace{.075in}

\item $\displaystyle\int_M S(g_b)-2\Big(\frac{2n-1}{n}\Big)S_C(g_b)=0$, where $\displaystyle S(g_b)-2\Big(\frac{2n-1}{n}\Big)S_C(g_b)$
\vspace{.075in}
\begin{enumerate}
\item changes sign, then there exists at least one sKlsc representative and every such metric has scaling constant in the regime
$(\frac{2n-1}{n},\infty)$. 

\vspace{.075in}
\item does not change sign, then the balanced representative is the unique sKlsc representative of $[g]$ and it has scaling constant $\kappa=\frac{2n-1}{n}$.
\end{enumerate}

\vspace{.075in}

\item $\displaystyle\int_M S(g_b)-2\Big(\frac{2n-1}{n}\Big)S_C(g_b)<0$, then there exists at least one sKlsc representative and every such metric has scaling constant in the regime $(1,\frac{2n-1}{n})$. 
\end{enumerate}
\vspace{.075in}
\end{theorem}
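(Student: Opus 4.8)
The plan is to convert the sKlsc equation into a one-parameter family of eigenvalue problems for Schr\"odinger operators on $(M,g_b)$. Writing a conformal metric as $e^{-2f}g_b$ and applying the conformal transformation rules for the Riemannian scalar curvature together with --- crucially, because $g_b$ is balanced, so that the Chern scalar curvature transforms through a constant multiple of the metric Laplacian $\Delta_b$ with \emph{no} first-order drift term (this is exactly where self-adjointness enters) --- the rule for $S_C$, the equation $S(e^{-2f}g_b)=2\kappa\,S_C(e^{-2f}g_b)$ becomes the $\kappa$-family of semilinear elliptic equations
\[
2\big(\kappa n-(2n-1)\big)\,\Delta_b f+(2n-1)(2n-2)\,|\nabla_b f|^2 \;=\; S(g_b)-2\kappa\,S_C(g_b).
\]
Two values of $\kappa$ are distinguished: at $\kappa=1$ the right-hand side is the quantity whose averaged vanishing forces the K\"ahler condition, so by hypothesis there is no solution there; at $\kappa=\tfrac{2n-1}{n}$ the second-order term drops out.

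First I would dispose of the analytically degenerate value $\kappa=\tfrac{2n-1}{n}$. There the equation is the eikonal-type first-order equation $(2n-1)(2n-2)\,|\nabla_b f|^2=P$, where $P:=S(g_b)-2\big(\tfrac{2n-1}{n}\big)S_C(g_b)$. Since $M$ is compact, $f$ attains a maximum and a minimum, where $\nabla_b f=0$, so any solution --- even in $C^1$, a fortiori the smooth solutions we need --- forces $P\ge 0$ and $P$ to vanish at least twice, while a constant $f$, i.e.\ $g_b$ itself, requires $P\equiv 0$; integrating gives $\int_M P\,dV_{g_b}=(2n-1)(2n-2)\int_M|\nabla_b f|^2\,dV_{g_b}\ge 0$, which is precisely what excludes $\kappa=\tfrac{2n-1}{n}$ when $\int_M P<0$ (case (3)) and is the borderline of case (2). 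This yields the statements in (1c), (2b) and all the ``$\kappa=\tfrac{2n-1}{n}$ only, must vanish twice'' assertions, once one also notes (as in the introduction) that for $P\not\equiv 0$ this autonomous Hamilton--Jacobi equation essentially never admits a classical, or even a viscosity, solution.

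For $\kappa\ne\tfrac{2n-1}{n}$, divide by $2(\kappa n-(2n-1))$ and substitute $v=e^{bf}$ with $b=\tfrac{(2n-1)(n-1)}{\kappa n-(2n-1)}$ to eliminate the gradient-squared term; the PDE becomes the linear equation $\Delta_b v=\mathcal{W}_\kappa\,v$ with
\[
\mathcal{W}_\kappa \;=\; \frac{(2n-1)(n-1)}{2\big(\kappa n-(2n-1)\big)^2}\,\Big(S(g_b)-2\kappa\,S_C(g_b)\Big).
\]
Since $v>0$ is required, an sKlsc representative of scaling constant $\kappa$ exists if and only if the bottom eigenvalue $\Lambda(\kappa):=\lambda_1\big(-\Delta_b+\mathcal{W}_\kappa\big)$ equals $0$, the positive ground state then being the conformal factor. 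Writing $\mathcal{W}_\kappa$ as the sum of a scaled copy of the fixed potential $P$ (whose amplitude blows up as $\kappa\to\tfrac{2n-1}{n}$) and a ``warping'' term proportional to $S_C(g_b)$ exhibits exactly the scaled-and-warped family of operators studied abstractly in Section~\ref{lack_of_stability}; when $P\equiv 0$ it collapses to a one-parameter Schr\"odinger family in $S_C(g_b)$, handled as in the K\"ahler situation of Section~\ref{contains_Kahler_representative}.

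It then remains to decide, in each regime, whether $\Lambda$ has a zero and where. I would establish: $\Lambda$ is continuous on each of $(-\infty,\tfrac{2n-1}{n})$ and $(\tfrac{2n-1}{n},\infty)$; as $\kappa\to\pm\infty$ one has $\mathcal{W}_\kappa\to 0$, so $\Lambda(\kappa)\to 0$, with the sign of the approach computed by first-order perturbation theory in terms of $\operatorname{sgn}\int_M S_C(g_b)\,dV_{g_b}$, i.e.\ of the (here negative) Gauduchon degree; $\Lambda(1)\ne 0$, there being no K\"ahler representative, with sign fixed by the sign-definite integral $\int_M\big(S(g_b)-2S_C(g_b)\big)\,dV_{g_b}$ and the test-function bound $\Lambda(\kappa)\le\frac{1}{\mathrm{Vol}(M)}\int_M\mathcal{W}_\kappa\,dV_{g_b}$; and the behaviour of $\Lambda$ as $\kappa\to\tfrac{2n-1}{n}^{\pm}$. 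For the last item, when $\int_M P\ne 0$ the average of $\mathcal{W}_\kappa$ diverges and one can force $\Lambda$ to $+\infty$ precisely when $P\ge 0$ and the extra conditions of \eqref{negative_conditions} fail, whereas when $P$ changes sign one must invoke the instability theorem of Section~\ref{lack_of_stability} to see that $\Lambda$ need not diverge and can pass through $0$ near $\tfrac{2n-1}{n}$. Combining these reference signs with the intermediate value theorem produces a zero of $\Lambda$ in the asserted regime in the existence cases; sign-definiteness of $\mathcal{W}_\kappa$ outside a bounded $\kappa$-window (forcing $\Lambda\ne 0$ there) confines all zeros to that regime and yields the non-existence and uniqueness statements; and the explicit eigenvalue estimates of \cite{Dabkowski-Lock_Schrodinger} supply the quantitative thresholds. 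The main obstacle is exactly this last item in the sign-changing case: near $\kappa=\tfrac{2n-1}{n}$ the potential is a large-amplitude perturbation that is simultaneously scaled and warped, for which the bottom eigenvalue fails to behave monotonically --- the ``surprising lack of stability'' that Section~\ref{lack_of_stability} is devoted to --- with the degenerate eikonal equation at $\kappa=\tfrac{2n-1}{n}$ as the secondary difficulty.
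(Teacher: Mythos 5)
Your overall architecture is exactly the paper's: the same change of variables $\vphi=e^{bf}$ turning the semilinear equation into the Schr\"odinger family $L_\kappa=-\Delta+\mathcal{W}_\kappa$, the same reduction of existence to ``$\lambda_0(\kappa)=0$'', the same endpoint analysis ($\lambda_0<0$ near $\kappa=1$ from the constant test function, $\lambda_0>0$ for $\kappa\gg\frac{2n-1}{n}$ since $\mathcal{W}_\kappa\to 0$ with positive average) combined with continuity of $\lambda_0$ and the intermediate value theorem, and the same eikonal treatment of the degenerate constant $\kappa=\frac{2n-1}{n}$ (compactness forces $P:=S(g_b)-2(\tfrac{2n-1}{n})S_C(g_b)\geq 0$ and at least two zeros of $P$, while $P\equiv 0$ gives the balanced metric itself in case (2)(b)).

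There is, however, one concrete gap: you have attached the instability theorem of Section~\ref{lack_of_stability} to the wrong case, and as a result the non-existence half of case \eqref{degenerate} --- that no sKlsc representative with $\kappa\in(1,\frac{2n-1}{n})$ exists when $P\geq 0$ and \eqref{negative_conditions} holds --- is not covered by any mechanism in your plan. In the sign-changing case (1)(a) no instability result is needed: for $\kappa$ near $\frac{2n-1}{n}$ the potential $\mathcal{W}_\kappa$ changes sign with diverging amplitude, a test function supported where it is negative drives $\lambda_0$ below zero on both sides of the degenerate constant, and existence in $(\frac{2n-1}{n},\infty)$ follows from the IVT against the positive tail; the theorem deliberately asserts nothing about $(1,\frac{2n-1}{n})$ there. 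The delicate case is (1)(c): at a zero $p$ of $P$ with $S_C(g_b)|_p\neq 0$ one necessarily has $S_C(g_b)|_p<0$, so for every $\kappa$ slightly below $\frac{2n-1}{n}$ the warped potential $S(g_b)-2\kappa S_C(g_b)$ itself changes sign while retaining positive integral; hence $\lambda_0<0$ at both ends of $(1,\frac{2n-1}{n})$, $\mathcal{W}_\kappa$ is not sign-definite anywhere in that window, and neither the IVT nor your ``sign-definiteness outside a bounded $\kappa$-window'' argument decides the interior. This is precisely where the paper invokes Theorem~\ref{scaled_warped_thm}, applied to the scaled-and-warped family $-\Delta+f(\kappa)\big(\mathcal{V}_1+h(\kappa)\mathcal{V}_2\big)$, to conclude $\lambda_0(\kappa)<0$ on all of $(1,\frac{2n-1}{n})$; without it, the ``only $\kappa=\frac{2n-1}{n}$ is possible'' assertion of (1)(c) remains unproven (your eikonal analysis only supplies the ``must vanish at least twice'' clause).
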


\begin{remark}
{\em
In Theorem \ref{negative_G_non_kahler} \eqref{possibly_both}, while at least one sKlsc metric with scaling constant in the regime of $(\frac{2n-1}{n},\infty)$ is guaranteed to exist, in certain situations there may also exist sKlsc metrics with scaling constants in the regime $(1,\frac{2n-1}{n})$.  Also, recall the discussion following Theorem \ref{broad_theorem} regarding Theorem \ref{negative_G_non_kahler} \eqref{degenerate}.
}
\end{remark}

Considering Theorem \ref{negative_G_non_kahler} in light of the analogous result for the case that the conformal class contains a K\"ahler representative, Theorem \ref{negative_Gauduchon_Kahler_specific}, we see here that, while there is a broader range of existence results, there specificity of the analogous situation is lacking.  This is due to a wider possible range of behaviors of the warped potential function in the Schr\"odinger operator formulation of the sKlsc equation.

In the case of positive Gauduchon degree, the distinction between broadness of existence is even more extreme.  While in Theorem \ref{positive_Gauduchon_Kahler_specific} we see that existence of non-K\"ahler sKlsc metrics is very rare when the conformal class contains a K\"ahler representative, in Theorem \ref{positive_G_non_kahler} below we find that such a metric is always guaranteed to exist in this situation.

\begin{theorem}
\label{positive_G_non_kahler}
Let $[g]$ be a Hermitian conformal structure of positive Gauduchon degree on a compact complex manifold of dimension $n\geq 2$.  If $[g]$ contains a balanced representative, $g_b$, but not a K\"ahler representative, then there exists at least one sKlsc representative and every such metric has scaling constant in the regime~$(-\infty,1)$.  
\end{theorem}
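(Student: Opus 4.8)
\emph{Proof proposal.} The plan is to convert the sKlsc equation into a one--parameter family of linear Schr\"odinger eigenvalue problems in the scaling constant $\kappa$, to read off the admissible range of $\kappa$ from an integral identity, and to produce a solution by locating, via the intermediate value theorem, a value of $\kappa$ at which the bottom eigenvalue vanishes. First I would fix the balanced representative $g_b$ and write a conformal metric as $g=e^{-2f}g_b$. Combining the standard conformal transformation law for the Riemannian scalar curvature in real dimension $2n$ with the fact that on a balanced manifold the Chern Laplacian on functions equals the Laplace--Beltrami operator (so the transformation law for $S_C$ involves only a self--adjoint second--order operator), the equation $S(g)=2\kappa S_C(g)$ becomes
\[
\big((2n-1)-n\kappa\big)\,\Delta f-(2n-1)(n-1)\,|\nabla f|^2=\kappa\,S_C(g_b)-\tfrac12 S(g_b),
\]
with $\Delta$ the negative Laplace--Beltrami operator of $g_b$. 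Since every $\kappa$ we must consider lies in $(-\infty,1)\subset(-\infty,\tfrac{2n-1}{n})$, the leading coefficient $(2n-1)-n\kappa$ is strictly positive, so we are away from the degenerate Hamilton--Jacobi value $\kappa=\tfrac{2n-1}{n}$, and the substitution $w=e^{pf}$ with $p=-\tfrac{(2n-1)(n-1)}{(2n-1)-n\kappa}\neq 0$ removes the gradient term and linearizes the equation to
\[
(-\Delta+\mathcal V_\kappa)\,w=0,\qquad \mathcal V_\kappa=\frac{(2n-1)(n-1)}{\big((2n-1)-n\kappa\big)^2}\Big(\tfrac12 S(g_b)-\kappa\,S_C(g_b)\Big);
\]
a positive solution $w$, equivalently a conformal factor $f=p^{-1}\log w$, exists if and only if $\mu_1(\kappa):=\mu_1(-\Delta+\mathcal V_\kappa)=0$, and $\kappa\mapsto\mu_1(\kappa)$ is continuous on $(-\infty,\tfrac{2n-1}{n})$.

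For the range, integrate the displayed PDE against $dV_{g_b}$: the Laplacian term integrates to zero, leaving $-(2n-1)(n-1)\int_M|\nabla f|^2=\kappa\int_M S_C(g_b)-\tfrac12\int_M S(g_b)$, so the right--hand side is $\le 0$, and $<0$ unless $f$ is constant. Positive Gauduchon degree together with the fact that $g_b$ is a Gauduchon metric gives $\int_M S_C(g_b)>0$, while the fundamental relation between the total Chern and Riemannian scalar curvatures on a compact Hermitian manifold (\cite{Liu-Yang_2,Gauduchon_scalar}), $\int_M S(g_b)\le 2\int_M S_C(g_b)$ with equality only in the K\"ahler case, is strict here since $[g]$ contains no K\"ahler representative. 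Hence a non-constant $f$ forces $\kappa<\kappa^*:=\frac{\int_M S(g_b)}{2\int_M S_C(g_b)}<1$; and a constant $f$ gives a rescaling of $g_b$, which is sKlsc only if $S(g_b)\equiv 2\kappa S_C(g_b)$, again with $\kappa=\kappa^*<1$. Thus every sKlsc representative has scaling constant in $(-\infty,1)$.

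For existence I would examine $\mu_1(\kappa)$ on $(-\infty,\kappa^*]$. At $\kappa=\kappa^*$ the constant test function gives $\mu_1(\kappa^*)\le \tfrac{1}{\mathrm{Vol}(M)}\int_M\mathcal V_{\kappa^*}=0$; if equality holds the ground state must, by the integral identity just used, be constant, hence $\mathcal V_{\kappa^*}\equiv 0$ and $g_b$ itself is the desired sKlsc metric. At the other extreme, as $\kappa\to-\infty$ the potential $\mathcal V_\kappa$ tends to $0$ uniformly while $\tfrac{1}{\mathrm{Vol}(M)}\int_M\mathcal V_\kappa$ stays positive, of size comparable to $|\kappa|^{-1}$; a quantitative lower bound for the bottom eigenvalue under a small potential---of the type supplied by \cite{Dabkowski-Lock_Schrodinger} and in the spirit of Corollary \ref{t-schrodinger_cor_II}---then forces $\mu_1(\kappa)>0$ for $\kappa$ sufficiently negative. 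Continuity and the intermediate value theorem therefore produce $\kappa_0\in(-\infty,\kappa^*]\subset(-\infty,1)$ with $\mu_1(\kappa_0)=0$; the detailed eigenvalue analysis of Section \ref{no_Kahler_representative}, resting on \cite{Dabkowski-Lock_Schrodinger}, is what guarantees that a genuine crossing occurs and can be taken with $\kappa_0\neq 0$. The positive ground state $w_0$ of $-\Delta+\mathcal V_{\kappa_0}$ is smooth by elliptic regularity, so $f_0=p^{-1}\log w_0$ is smooth and $e^{-2f_0}g_b$ is a smooth Hermitian metric with $S=2\kappa_0 S_C$; since rescaling preserves ``balanced but not K\"ahler'' and $[g]$ contains no K\"ahler representative, this metric is non-K\"ahler, hence an sKlsc representative.

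The main obstacle is the global behavior of $\kappa\mapsto\mu_1(\kappa)$ on $(-\infty,\kappa^*)$. In the K\"ahler case of Theorem \ref{positive_Gauduchon_Kahler_specific} the potential $\mathcal V_\kappa$ is a $\kappa$--dependent positive multiple of the \emph{fixed} function $S_C(g_k)$, so the eigenvalue--scaling results of \cite{Dabkowski-Lock_Schrodinger} apply almost directly; here $\tfrac12 S(g_b)-\kappa S_C(g_b)$ genuinely changes shape with $\kappa$---its zero set and sign pattern deform---which is exactly the warping responsible for the instability phenomenon isolated in Section \ref{lack_of_stability}. Showing that such scaled--and--warped families still exhibit a zero--eigenvalue crossing in the relevant range, at a nonzero $\kappa_0$, is the crux; the sign bookkeeping in the two conformal transformation laws and pinning down the exact direction of the total--scalar--curvature inequality also demand care, but are routine.
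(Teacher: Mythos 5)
Your proposal is correct and follows essentially the same route as the paper: pass to the balanced representative so the first-order term vanishes, transform the sKlsc equation into the one-parameter Schr\"odinger family $L_\kappa$, derive the restriction $\kappa<1$ by integrating against the volume form (the paper does this at the Gauduchon metric, which is equivalent since $g_b$ is Gauduchon), show $\lambda_0<0$ near the upper end of the admissible range via the constant test function and $\lambda_0>0$ as $\kappa\to-\infty$ from the decay of the potential together with Theorem \ref{t-schrodinger_cor_II}, and conclude by continuity of the lowest eigenvalue. The only differences are cosmetic (you anchor the upper endpoint at $\kappa^*=\int_M S(g_b)/(2\int_M S_C(g_b))$ rather than at $1-\epsilon$, and you make the degenerate sub-case $\mathcal V_{\kappa^*}\equiv 0$ explicit), so no further comparison is needed.
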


\subsection{Acknowledgements}
The authors would like to thank Joseph Conlon and Lorenzo Sadun for many useful remarks, as well as John Gardner and Oliver Kelavan for many interesting conversations regarding the physical implications of this work.  Also, we would like to thank Jessica Dabkowski for putting up with us, which will be good practice for dealing with children.

%%%%%%%%%%%%%%%%%%%%%%%%%%%%%%%%%%%%

\section{Background}
This section is composed to two parts which contain summaries of the necessary background in complex geometry and Schr\"odinger operators respectively.  Further references are given in each section.  

\subsection{Hermitian manifolds and scalar curvatures}
A brief description of the background necessary for sKlsc component of this work is provided in this section.  In particular, Hermitian manifolds, the Chern connection, its curvature, and the transformations of the Riemannian and Chern scalar curvatures under a conformal change are discussed.  For a more detailed account of these topics see \cite{Huybrechts,Besse,Liu-Yang_1,Angella-Simone-Spotti}.

A Riemannian metric $g$, on a complex manifold $(M,J)$, is said to be Hermitian if
$g(JX, JY)=g(X,Y)$
for any vector fields $X$ and $Y$.  The {\em associated $2$-form} of a Hermitian metric, $\omega\in\Lambda^{1,1}$, is defined by
$\omega(X,Y)=g(JX,Y)$, and can be written as
\begin{align}
\omega=\sqrt{-1}\sum_{i,j}g_{i\bar{j}}dz_i\wedge d\bar{z}_j,\phantom{=}\text{where}\phantom{=} g_{i\bar{j}}=g\Big(\frac{\partial}{\partial z_i},\frac{\partial}{\partial \bar{z}_j}\Big).
\end{align}
A Hermitian metric is called {\em K\"ahler} if its associated $2$-form is closed.

On a Hermitian manifold $(M, J, g)$, there exists a unique connection on the holomorphic tangent bundle $T^{1,0}M$, known as the {\em Chern connection}, which is compatible with the Hermitian metric and the complex structure.  The bundles $T^{1,0}M$ and $TM$ are naturally isomorphic, which allows for a comparison between the Chern connection and the Levi-Civita connection.  While both are compatible with the metric, the Chern connection does not necessarily induce a symmetric connection on $TM$ and the Levi-Civita connection does not necessarily induce a connection compatible with the complex structure.  If these connections do satisfy the additional condition of the other, they are equal since both are unique.  This occurs if and only if $g$ is K\"ahler.

While knowledge of Riemannian geometry is assumed here, many readers may not be familiar with the Chern curvatures tensors (i.e. those with respect to the Chern connection) in the non-K\"ahler setting.  Thus, a brief description is provided, see \cite{Huybrechts,Liu-Yang_2} for more detail.  Let $(M,J,g)$ be a Hermitian manifold, of complex dimension $n$, with associated $2$-form $\omega$.  The components of the full Chern curvature tensor 
on the Hermitian holomorphic vector bundle $(T^{1,0}M,g)$ are
\begin{align}
\Theta(g)_{i\bar{j}k\bar{l}}=-\frac{\partial^{2}g_{k\bar{l}}}{\partial z_i\partial \bar{z}_j}+g^{p\bar{q}}\frac{\partial g_{p\bar{l}}}{\partial \bar{z}_j}\frac{\partial g_{k\bar{q}}}{\partial z_i}.
\end{align}
The Chern-Ricci curvature,
\begin{align}
\Theta(g)_{i\bar{j}}=g^{k\bar{l}}\Theta(g)_{i\bar{j}k\bar{l}},
\end{align}
is obtained by tracing on the $k$ and $\bar{l}$ indices, and the Chern scalar curvature,
\begin{align}
S_C(g)=g^{i\bar{j}}\Theta(g)_{i\bar{j}},
\end{align}
by tracing further on the $i$ and $\bar{j}$ indices.
The Chern-Ricci curvature has the associated Chern-Ricci form given by
$\rho=\sqrt{-1}\sum_{i,j}\Theta(g)_{i\bar{j}}dz_i\wedge d\bar{z}_j=-\sqrt{-1}\partial \bar{\partial}\log \det (g_{i\bar{j}})$.  
%This, along with $S_C(g)$ and $\omega$, satisfies the relationship
%\begin{align}
%\frac{S_C(g)}{n}\omega^n=\rho\wedge\omega^{n-1}.
%\end{align}

The focus of this paper is to study the existence of Hermitian metrics on compact manifolds for which a certain desired relationship between the Chern and Riemannian scalar curvatures hold.
It was shown in \cite{Liu-Yang_2} that these are related by
\begin{align}
\label{S_SC_relationship}
S(g)=2S_C(g)+\langle\partial\partial^*\omega,\omega\rangle+\langle\overline{\partial}\overline{\partial}^*\omega,\omega\rangle-2|\partial^*\omega|^2-\frac{1}{2}|T(g)|^2,
\end{align}
where $T(g)$ is the torsion tensor.  A relationship between these scalar curvatures was also found in terms of Lee forms in \cite{Gauduchon_scalar}.  Extensive use will be made of the transformations of the Riemannian and Chern scalar curvatures under a conformal change of the metric which are recalled as follows.  Consider the conformal change $\tilde{g}=e^{2f}g$.  It is well-known, for a real $2n$-dimensional manifold, that the Riemannian scalar curvature of this conformal metric is given by
\begin{align}
\label{S_transform}
S(\tilde{g})=e^{2f}\Big(2(2n-1)\Delta f-2(2n-1)(n-1)|\nabla f|^2+S(g)\Big),
\end{align}
where $\Delta$ denotes the Laplace-Beltrami operator with sign convention so that the spectrum is non-positive \cite{Besse}.
The Chern scalar curvature of this conformal metric is given by
\begin{align}
\label{S_C_transform_1}
S_C(\tilde{g})&=e^{2f}\Big(n\Delta f-n\langle df,\theta\rangle_{\omega}+S_C(g)\Big),
\end{align}
where $\theta$ is the Lee form, or torsion $1$-form, associated to $\omega$, which is defined by $d\omega^{n-1}=\theta\wedge \omega^{n-1}$.  For a balanced Hermitian metric, i.e. a Hermitian metric for which the Lee form vanishes, the transformation of the Chern scalar curvature simplifies accordingly.
In particular, note that K\"ahler metric are necessarily balanced.
See \cite{Angella-Simone-Spotti} for an excellent reference regarding the conformal transformation of $S_C(g)$.

\subsection{Schr\"odinger operators on compact manifolds}
\label{schrodinger_background}
The Schr\"odinger equation arises in an astonishing number of both mathematical and physical problems, and while its origins lie in the study of quantum mechanics, see \cite{Schrodinger} and references therein, it has driven an incredible mathematical theory itself.

On a Riemannian manifold, the time independent Schr\"odinger equation is given by
\begin{align}
L_{\mathcal{V}}(\vphi)=-\Delta\vphi+\mathcal{V}\vphi=\lambda\vphi,
\end{align}
where $\Delta$ is the Laplace-Beltrami operator with non-positive spectrum, $\mathcal{V}$ is a smooth potential and $\lambda$ is a constant.  It is of both physical and mathematical interest to understand the negative eigenvalues ({\em bound states}) of the Schr\"odinger operator $L_{\mathcal{V}}$, in particular the lowest eigenvalue ({\em ground state}).  On compact manifolds, the lowest eigenvalue is found  by minimizing the Rayleigh quotient
\begin{align}
\label{Rayleigh}
\min_{0\not\equiv\vphi\in H^1}\frac{\int_M|\nabla\vphi|^2+\mathcal{V}\vphi^2}{\int_M\vphi^2}.
\end{align}
Many of the classical results fail to hold in the compact setting due to a failure of certain heat kernel estimates.  Recently, there has been some work on eigenvalue problems in this setting \cite{Grigor'yan-Netrusov-Yau,Grigor'yan-Nadirashvili-Sire,Dolbeault-Esteban-Laptev-Loss}.

On a compact manifold, the only positive eigenfunction is that associated to the lowest eigenvalue, which is unique up to scale.  For the work of this paper, it will be essential to understand when the lowest eigenvalue of such operators on compact manifolds is zero ({\em when the ground state has zero energy}).    It is relatively simple to ascertain the sign of the lowest eigenvalue for every potential $\mathcal{V}$ except when
\begin{align}
\begin{split}
\label{potential_conditions}
&\text{(i)\phantom{ii} $\mathcal{V}$ is smooth and changes sign on the manifold}\\
&\text{(ii)\phantom{i} $\int_M \mathcal{V}>0$,}
\end{split}
\end{align}
The question of sign for the lowest eigenvalue in this case is subtle, particularly that of if the lowest eigenvalue can be positive.
The authors developed the following estimate which will be used frequently throughout this work.
\begin{theorem}[\cite{Dabkowski-Lock_Schrodinger}, Theorem $1.1$]
\label{t-schrodinger_cor_II}
Let $(M,g)$ be a compact manifold, and consider the Schr\"odinger operator $L_{\mathcal{V}}=-\Delta+\mathcal{V}$ with the potential $\mathcal{V}$ satisfying \eqref{potential_conditions}.  If
\begin{align*}
\frac{P||\mathcal{V}||_{\infty}\big(4Vol(M)||\mathcal{V}||_{\infty}+\int_M\mathcal{V}\big)}{\int_M\mathcal{V}}\leq1,
\end{align*}
where $P$ is the Poincar\'e constant, then the lowest eigenvalue of $L_{\mathcal{V}}$
is strictly positive.
\end{theorem}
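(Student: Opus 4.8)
The plan is to argue directly from the variational characterization \eqref{Rayleigh} of the lowest eigenvalue $\lambda$ of $L_{\mathcal{V}}$ and show that the quadratic form $Q(\varphi)=\int_M|\nabla\varphi|^2+\mathcal{V}\varphi^2$ admits a lower bound $Q(\varphi)\geq c\int_M\varphi^2$ with $c>0$, which forces $\lambda>0$. The key manipulation is to split off the mean value: write $\varphi=\bar{\varphi}+\psi$ where $\bar{\varphi}=Vol(M)^{-1}\int_M\varphi$ is constant and $\int_M\psi=0$. Since $\bar\varphi$ is constant, $\nabla\varphi=\nabla\psi$, and expanding the potential term gives
\begin{align*}
Q(\varphi)=\int_M|\nabla\psi|^2+\bar{\varphi}^2\int_M\mathcal{V}+2\bar{\varphi}\int_M\mathcal{V}\psi+\int_M\mathcal{V}\psi^2.
\end{align*}
The term $\bar{\varphi}^2\int_M\mathcal{V}$ is nonnegative by the hypothesis $\int_M\mathcal{V}>0$, and the strategy is to absorb the last two (possibly negative) terms into this term together with the Dirichlet energy $\int_M|\nabla\psi|^2$, using only the Poincar\'e inequality $\int_M\psi^2\leq P\int_M|\nabla\psi|^2$, valid for mean-zero $\psi$.

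For the absorption, note first that $\int_M\mathcal{V}\psi^2\geq -||\mathcal{V}||_{\infty}\int_M\psi^2\geq -P||\mathcal{V}||_{\infty}\int_M|\nabla\psi|^2$. For the cross term, Cauchy--Schwarz followed by Poincar\'e gives $\big|\int_M\mathcal{V}\psi\big|\leq ||\mathcal{V}||_{\infty}\,Vol(M)^{1/2}\big(\int_M\psi^2\big)^{1/2}\leq ||\mathcal{V}||_{\infty}\big(P\,Vol(M)\big)^{1/2}\big(\int_M|\nabla\psi|^2\big)^{1/2}$, and then Young's inequality with a free parameter $t\in(0,1)$ bounds $\big|2\bar{\varphi}\int_M\mathcal{V}\psi\big|$ by $t\,\bar{\varphi}^2\int_M\mathcal{V}$ plus $t^{-1}P\,Vol(M)||\mathcal{V}||_{\infty}^2\big(\int_M\mathcal{V}\big)^{-1}\int_M|\nabla\psi|^2$. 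Collecting terms yields
\begin{align*}
Q(\varphi)\geq (1-t)\,\bar{\varphi}^2\int_M\mathcal{V}+\Big(1-P||\mathcal{V}||_{\infty}-\frac{P\,Vol(M)||\mathcal{V}||_{\infty}^2}{t\int_M\mathcal{V}}\Big)\int_M|\nabla\psi|^2.
\end{align*}
Both coefficients can be made strictly positive by a suitable choice of $t$ precisely when $P||\mathcal{V}||_{\infty}\big(Vol(M)||\mathcal{V}||_{\infty}+\int_M\mathcal{V}\big)<\int_M\mathcal{V}$; since $||\mathcal{V}||_{\infty}>0$, the hypothesis $P||\mathcal{V}||_{\infty}\big(4Vol(M)||\mathcal{V}||_{\infty}+\int_M\mathcal{V}\big)\leq\int_M\mathcal{V}$ implies this strict inequality with room to spare, and it also guarantees $P||\mathcal{V}||_{\infty}<1$ (which is needed for the admissible interval of $t$ to be an interval at all). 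Applying Poincar\'e once more to pass from $\int_M|\nabla\psi|^2$ to $\int_M\psi^2$ and using $\int_M\varphi^2=Vol(M)\bar{\varphi}^2+\int_M\psi^2$, we obtain $Q(\varphi)\geq c\int_M\varphi^2$ for an explicit $c>0$, hence $\lambda\geq c>0$.

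The main obstacle is the cross term $2\bar{\varphi}\int_M\mathcal{V}\psi$, which must be genuinely distributed between the ``free'' positive term $\bar{\varphi}^2\int_M\mathcal{V}$ and the Dirichlet energy; the bookkeeping of the Young parameter $t$ has to be arranged so that the resulting condition for positive-definiteness of the quadratic form in the two quantities $|\bar{\varphi}|$ and $\big(\int_M|\nabla\psi|^2\big)^{1/2}$ reduces to the stated hypothesis (up to the harmless constant $4$, reflecting that not every step above is optimized). Everything else — the variational principle, the discreteness of the spectrum and attainment of the lowest eigenvalue on a compact manifold, and the Poincar\'e inequality — is standard.
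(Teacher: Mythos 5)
This statement is quoted in the paper from \cite{Dabkowski-Lock_Schrodinger} (Theorem $1.1$ there) and no proof of it appears in the present paper, so there is nothing internal to compare against; judged on its own, your argument is correct and complete. The decomposition $\varphi=\bar{\varphi}+\psi$ with $\int_M\psi=0$, the Poincar\'e inequality applied to $\psi$, and the Young-parameter absorption of the cross term are all carried out correctly: the coefficient identity
\begin{align*}
Q(\varphi)\geq (1-t)\,\bar{\varphi}^2\int_M\mathcal{V}+\Big(1-P\|\mathcal{V}\|_{\infty}-\frac{P\,Vol(M)\|\mathcal{V}\|_{\infty}^2}{t\int_M\mathcal{V}}\Big)\int_M|\nabla\psi|^2
\end{align*}
is right, an admissible $t\in(0,1)$ exists precisely when $P\|\mathcal{V}\|_{\infty}\big(Vol(M)\|\mathcal{V}\|_{\infty}+\int_M\mathcal{V}\big)<\int_M\mathcal{V}$, and this strict inequality does follow from the stated hypothesis with the constant $4$ because $3P\,Vol(M)\|\mathcal{V}\|_{\infty}^2>0$ (you also correctly extract $P\|\mathcal{V}\|_{\infty}<1$ from the hypothesis, which is needed for the interval of admissible $t$ to be nonempty). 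Passing back to $\int_M\varphi^2=Vol(M)\bar{\varphi}^2+\int_M\psi^2$ then gives a uniform positive lower bound on the Rayleigh quotient, hence $\lambda_0>0$. In fact your argument establishes the conclusion under the weaker hypothesis without the factor $4$, so it implies the theorem as stated; the only caveat worth recording is the convention that $P$ is the constant in $\int_M\psi^2\leq P\int_M|\nabla\psi|^2$ for mean-zero $\psi$, which is the reading consistent with how $P$ enters the displayed inequality.
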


It is important to the work of this paper to understand when the lowest eigenvalue is zero.  Certainly Theorem \ref{t-schrodinger_cor_II} provides an obstruction to this, but 
even for such an operator $L_{\mathcal{V}}$ that does not satisfy the conditions of Theorem \ref{t-schrodinger_cor_II},
it is unlikely that zero is contained in the spectrum, and even less likely that it is the lowest eigenvalue.  In \cite{Dabkowski-Lock_Schrodinger}, the authors studied this lowest eigenvalue question under scalings of the manifold given a fixed potential.
Since the Laplace-Beltrami operator scales inversely to the scaling of the metric, the question is equivalent to one under scalings of the potential.  More specifically, the authors studied the following generalization of such a scaling.  On a compact manifold, let $\mathcal{V}$ be a fixed potential satisfying \eqref{potential_conditions},
and consider the $1$-parameter family positive scaling multipliers $f(t)$, where $f:\RR^+\rightarrow \RR^+$ is positive continuous function satisfying
\begin{align}
\begin{split}
\label{f_conditions}
%&\text{(i)\phantom{iii}$f(t)>0$, for all $t$}\\
&\text{(i)\phantom{ii}$\lim_{t\rightarrow 0}f(t)=0$}\\
&\text{(ii)\phantom{ii}$\lim_{t\rightarrow\infty}f(t)=\infty$}.
\end{split}
\end{align}
In this setting, the authors proved the following theorem.

\begin{theorem}[\cite{Dabkowski-Lock_Schrodinger}, Theorem $1.2$]
\label{Schrodinger_f_thm}
Let $(M,g)$ be a compact manifold and consider the $1$-parameter family of Schr\"odinger operators
\begin{align*}
L_{f(t)\mathcal{V}}=-\Delta+f(t)\mathcal{V},
\end{align*}
where $\mathcal{V}$ and $f(t)$ satisfy \eqref{potential_conditions} and \eqref{f_conditions} respectively.  There exists a $t^*>0$ for which zero is the lowest eigenvalue of $L_{f(t^*)\mathcal{V}}$.  Furthermore, if $f(t)$ is a strictly monotone increasing function, then there is a unique such $t^*$.
\end{theorem}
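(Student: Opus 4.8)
The plan is to isolate the behavior of the bottom of the spectrum as a function of the scaling. Write $\mu(s)$ for the lowest eigenvalue of $-\Delta+s\mathcal{V}$, $s\ge 0$, so by \eqref{Rayleigh},
\begin{align*}
\mu(s)=\min_{0\not\equiv\varphi\in H^1}\frac{\int_M|\nabla\varphi|^2+s\mathcal{V}\varphi^2}{\int_M\varphi^2}.
\end{align*}
For each fixed $\varphi$ the Rayleigh quotient is affine in $s$, so $\mu$ is concave, hence continuous, on $[0,\infty)$, with $\mu(0)=0$ attained by the constant function. Since $f$ is continuous, $t\mapsto\mu(f(t))$ is continuous, and ``zero is the lowest eigenvalue of $L_{f(t)\mathcal{V}}$'' is precisely the condition $\mu(f(t))=0$. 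So it suffices to prove that $\mu$ has a unique zero $s^*$ in $(0,\infty)$ and then invoke \eqref{f_conditions}.

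The two inputs are estimates at the two ends of the range of $s$. For small $s>0$: since $\mathcal{V}$ satisfies \eqref{potential_conditions}, so does $s\mathcal{V}$, and the quantity appearing in Theorem \ref{t-schrodinger_cor_II} for the potential $s\mathcal{V}$ is a positive constant times $s$, hence $\le1$ once $s$ is small; Theorem \ref{t-schrodinger_cor_II} then yields $\mu(s)>0$ for all sufficiently small $s>0$. (Equivalently, since $0$ is a simple eigenvalue of $-\Delta$ with constant eigenfunction, analytic perturbation theory gives $\mu'(0)=\frac{1}{Vol(M)}\int_M\mathcal{V}>0$.) For large $s$: as $\mathcal{V}$ changes sign, the set $U=\{\mathcal{V}<-\delta\}$ is nonempty and open for some $\delta>0$, and testing the Rayleigh quotient against a fixed $0\not\equiv\varphi\in C_c^\infty(U)$ gives $\mu(s)\le\int_M|\nabla\varphi|^2/\int_M\varphi^2-\delta s\to-\infty$.

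Now I combine these with concavity. The superlevel set $\{s>0:\mu(s)>0\}$ is convex (as $\mu$ is concave), contains points arbitrarily near $0$, and is bounded above; together with $\mu(0)=0$ this forces it to be an interval $(0,s^*)$ with $0<s^*<\infty$ and $\mu(s^*)=0$. To see that $\mu<0$ strictly for $s>s^*$, it is enough to rule out $\mu$ vanishing on a nondegenerate interval $[a,b]\subset(0,\infty)$: were that the case, the (positive) ground state $\varphi_c$ realizing $\mu(c)$ at an interior point $c\in(a,b)$ would make $s\mapsto R(\varphi_c,s):=\frac{\int_M|\nabla\varphi_c|^2+s\int_M\mathcal{V}\varphi_c^2}{\int_M\varphi_c^2}$ an affine function that is $\ge\mu$ and hence $\ge0$ on $[a,b]$, yet vanishes at the interior point $c$, so it vanishes identically; this forces $\int_M|\nabla\varphi_c|^2=0$ and $\int_M\mathcal{V}\varphi_c^2=0$, but then $\varphi_c$ is constant and $\int_M\mathcal{V}\varphi_c^2=\varphi_c^2\int_M\mathcal{V}>0$, a contradiction. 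Hence $s^*$ is the unique zero of $\mu$ on $(0,\infty)$. Finally, by \eqref{f_conditions} and the intermediate value theorem $f$ attains the value $s^*\in(0,\infty)$, producing $t^*>0$ with $\mu(f(t^*))=0$; and if $f$ is strictly increasing it is injective, so $t^*$ is unique. The one genuinely delicate point is the exclusion of a flat zero-interval for $\mu$, which is precisely where the hypothesis $\int_M\mathcal{V}>0$ enters in an essential way; everything else is the intermediate value theorem applied to a concave function.
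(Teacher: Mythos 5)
Your argument is correct, but note that the paper itself does not prove this statement: it is imported verbatim from \cite{Dabkowski-Lock_Schrodinger} (Theorem $1.2$ there), and the only in-text indication of the method is Remark \ref{negative_eigenvalue}. Your two endpoint estimates coincide exactly with that sketch --- positivity of the ground state energy for small scaling via Theorem \ref{t-schrodinger_cor_II} applied to $s\mathcal{V}$ (whose test quantity is indeed linear in $s$), and negativity for large scaling via a test function supported in $\{\mathcal{V}<-\delta\}$ --- so the existence of $t^*$ follows the same route. What you add is the observation that $\mu(s)=\lambda_0(-\Delta+s\mathcal{V})$ is concave in $s$, being an infimum of functions affine in $s$; this packages the uniqueness of the critical scaling $s^*$ cleanly, since a concave function that is positive somewhere on $(0,s^*)$ and vanishes at $s^*$ is strictly negative beyond $s^*$, and your exclusion of a flat zero-interval (using that a ground state realizing a degenerate affine Rayleigh line would have to be a constant with $\int_M\mathcal{V}\varphi^2>0$) correctly handles the remaining degenerate case. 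Two cosmetic points: the step ``it is enough to rule out $\mu$ vanishing on a nondegenerate interval'' silently uses that a concave function vanishing at two points is nonnegative between them, which is worth saying; and strictly speaking the uniqueness claim in the theorem is uniqueness of $t^*$, which you correctly reduce to uniqueness of $s^*$ plus injectivity of a strictly increasing $f$. This is a legitimate, self-contained proof, arguably tighter on the uniqueness side than what the present paper records.
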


\begin{remark}
{\em
The lowest eigenvalue of $L_{f(t^*)\mathcal{V}}$ being zero is equivalent to the existence of a positive solution to $L_{f(t^*)\mathcal{V}}(\vphi)=0$, and that for each such $t^*$ this solution is unique up scale.
A positive solution to ${L_{f(t^*)\mathcal{V}}}(\vphi)=0$, for $t^*>0$, is necessarily non-constant. 
}
\end{remark}

\begin{remark}
\label{negative_eigenvalue}
{\em
In fact, it was shown that there are regimes over which the lowest eigenvalue is positive and regimes when it is negative, see \cite[Theorem $1.2$]{Dabkowski-Lock_Schrodinger} in its entirety.  Under such a scaling Theorem \ref{t-schrodinger_cor_II} holds to show that there exists a positive regime.  We see there is a negative regime by examining the Rayleigh quotient \eqref{Rayleigh} using a test function with compact support in the region of the manifold where $\mathcal{V}<0$.  Clearly, for $f(t)$ large enough this will be negative (See \cite{Dabkowski-Lock_Schrodinger}, Proposition $3.1$).
}
\end{remark}

The full generality of our work on the sKlsc problem here actually requires the analysis of a family of Schr\"odinger operators that have both a scaling and a warping of the potential function.  Specifically, families of operators of the form 
\begin{align}
-\Delta+f(t)\Big(\mathcal{V}_1+h(t)\mathcal{V}_2\Big)
\end{align}
will be studied, where $\mathcal{V}_1,\mathcal{V}_2\in C^{\infty}(M)$ are fixed potentials, and $f(t)$ and $h(t)$ are scaling and warping functions respectively.  Largely, the behavior of the eigenvalues of such families operators is stable and the above results, for when there is only a scaling of the potential, can be utilized in the analysis.  However, in a certain circumstance, which is encountered in the studying of sKlsc question, there is actually a loss of stability for the operators in question.
The lack of stability is not immediately obvious and requires additional analysis.  This is done in Section \ref{lack_of_stability}.

%%%%%%%%%%%%%%%%%%%%%%%%%%%%%%%%%%
\section{The sKlsc equation}
\label{sKlsc_equation}
Here we understand the sKlsc relationship $S(g)-2\kappa\cdot S_C(g)$ as a $1$-parameter family of partial differential equations over the conformal class of the metric in the parameter of the scaling constant $\kappa$.  Next, certain restrictions on the set of admissible scaling constants depending upon the Gauduchon degree of the conformal class are established.  Subsequently, we make a change of variables, depending upon the parameter $\kappa$, which transforms the original family of equations into a $1$-parameter family of Schr\"odinger type equations which have both a scaling and a warping of the potential.  In most cases, the behavior of the lowest eigenvalues of this family of operators is stable.  However, in one particular case, which is germane to the sKlsc problem, it is not clear whether or not the behavior is stable.  We develop the necessary theory around this in Section~\ref{lack_of_stability}, and show that there is, in fact, a lack of stability in certain circumstances.

Let $(M,J,g)$ be a compact Hermitian manifold of complex dimension $n\geq 2$.  Consider the conformal metric $\tilde{g}=e^{-2f}g$, and recall that we seek a conformal metric such that
\begin{align}
\label{sKlsc_equation_2}
S(\tilde{g})-2\kappa\cdot S_C(\tilde{g})=0.
\end{align}
We will refer to \eqref{sKlsc_equation_2} as the {\em sKlsc equation}.
In this section, the sKlsc equation will be formulated as a partial differential equation in the conformal factor $f$.  Subsequently, the question of whether there are certain conditions that either restrict the possible range of scaling constants, $\kappa$, for which there is potentially a solution to \eqref{sKlsc_equation_2}, or provide an obstruction to the existence of any solution to the sKlsc problem altogether, is addressed.  Recall that our interest is in non-K\"ahler sKlsc metrics, which  are equivalent to sKlsc metrics with scaling constants $\kappa\neq 1$.  Thus, for the remainder of this section, it is assumed that $\kappa\neq 1$.

Using the transformations of the Chern and Riemannian scalar curvatures under the conformal change of the metric $\tilde{g}=e^{-2f}g$ for $f\in C^{\infty}(M)$, recall \eqref{S_transform} and \eqref{S_C_transform_1}, the sKlsc equation can be formulated as the semi-linear PDE
\begin{align}
\label{sKlsc_PDE_1}
(2n-1-n\kappa)\Delta f-(2n-1)(n-1)|\nabla f|^2+n\kappa\langle df, \theta\rangle_{\omega}+\frac{1}{2}\Big(S(g)-2\kappa \cdot S_C(g)\Big)=0.
\end{align}
As $\kappa$ is allowed to vary, what we actually obtain is a $1$-parameter family of second-order semi-linear PDEs in the scaling constant.  Notice that the corresponding elliptic operators are not self-adjoint, due to the $\langle df,\theta\rangle_{\omega}$ term, unless the Lee form $\theta$ vanishes in which case the reference metric, $g$, is necessarily balanced.

Observe, as follows, that it is straightforward to obtain certain simple restrictions on when it is possible for solutions to the sKlsc problem to exist.  As a reference metric, choose the unique (up to homotheties) Gauduchon metric in the conformal class, $g_G$.  Since $M$ is compact, and the Lee form of the Gauduchon metric is co-closed, the non-self-adjoint first order term vanishes under integration and we see that any solution $f$ must satisfy
\begin{align}
\label{sKlsc_equation_integral}
\int_M|\nabla f|^2=\frac{1}{2(2n-1)(n-1)}\int_MS(g_G)-2\kappa\cdot S_C(g_G).
\end{align}
From \eqref{S_SC_relationship}, notice that $\int_M S(g_G)-2S_C(g_G)<0$.  Thus, if there is one or more solutions to the sKlsc problem, one finds that the range of possible scaling constants is restricted as follows:
\begin{align}
\begin{split}
\label{kapparestrict}
\text{If the Gauduchon degree of the conformal class is }
\begin{cases}
\text{negative}\phantom{=}& \kappa>1\\
\text{positive}\phantom{=}& \kappa<1
\end{cases}.
%\bullet&\phantom{i} \text{If the Gauduchon degree is negative then $\kappa>1$}\\
%\bullet&\phantom{i} \text{If $\int_M S_C>0$ then $\kappa<1$.}
\end{split}
\end{align}

\begin{remark}
{\em  In the case that the Gauduchon degree of the conformal class is zero, there are no solutions to the sKlsc problem other than K\"ahler metrics.  This is seen by rewriting the right hand side of \eqref{sKlsc_equation_integral} as 
\begin{align}
\begin{split}
\int_MS(g_G)-2\kappa\cdot S_C(g_G)=&(2-2\kappa)\int_MS_C(g_G)+\int_M S(g_G)-2S_C(g_G)\\
=&\int_M S(g_G)-2S_C(g_G)\leq 0,
\end{split}
\end{align}
since $\int_M S_C(g_G)=0$ for the Gauduchon metric.  Thus \eqref{sKlsc_equation_integral} can only hold if both sides vanish, in which case the initial metric must be K\"ahler.
}
\end{remark}

Notice, when $\kappa=\frac{2n-1}{n}$, the coefficient of the Laplacian term in  \eqref{sKlsc_PDE_1} vanishes and sKlsc equation degenerates into the first-order non-linear PDE
\begin{align}
\label{(2n-1)/n_equation}
|\nabla f|^2-\frac{1}{n-1}\langle df, \theta\rangle_{\omega}=\frac{S(g)-2(\frac{2n-1}{n})S_C(g)}{2(2n-1)(n-1)}.
\end{align}
Therefore, the study of the sKlsc equation separates into the following two cases:
\begin{align}
\begin{split}
\bullet&\phantom{i}\text{When $\kappa\neq\frac{2n-1}{n}$}\\
\bullet&\phantom{i}\text{When $\kappa=\frac{2n-1}{n}$.}
\end{split}
\end{align}
The scaling constant $\kappa=\frac{2n-1}{n}$ will be referred to as the {\em degenerate scaling constant}.  There are many obstructions to the existence of sKlsc metrics for this degenerate scaling constant.  In particular, if the conformal class contains a balanced representative, then the sKlsc problem for $\kappa=\frac{2n-1}{n}$ is obstructed unless $S_C\leq0$ and not identically zero.  This degenerate case will be discussed below in Section \ref{sKlsc_(2n-1)/n}.
For the remainder of this section, it is assumed that $\kappa\neq\frac{2n-1}{n}$, and the sKlsc equation \eqref{sKlsc_PDE_1} will be examined accordingly.

Because we are now assuming that $\kappa\neq\frac{2n-1}{n}$, the change of variables
\begin{align}
\label{conformal_factor_change}
e^{-2f}=\vphi^{\frac{2(2n-1-n\kappa)}{(2n-1)(n-1)}}
\end{align}
is admissible for positive $\vphi\in C^{\infty}(M)$.  This change of variables involves the scaling constant and, in fact, what we have is a $1$-parameter family, in the parameter $\kappa$, of these changes of variables corresponding to the family of PDEs in \eqref{sKlsc_PDE_1}.  Under this change, \eqref{sKlsc_PDE_1}, the formulation of he sKlsc equation as a semi-linear PDE, can be transformed into a perturbation of a Schr\"odinger equation by a first-order non-self-adjoint term
\begin{align}
-\Delta \vphi-\frac{n\kappa}{2n-1-n\kappa}\langle d\vphi,\theta\rangle_{\omega}+\frac{(2n-1)(n-1)}{2(n\kappa+1-2n)^2}\Big(S(g)-2\kappa\cdot S_C(g)\Big)\vphi=0.
\end{align}
Since the spectrum of non-self-adjoint operators is non-real in general, in the non-zero Gauduchon degree setting we restrict our attention to conformal classes which contain a balanced representative, which we denote by $g_b$, so that the Lee form vanishes and the sKlsc equation based at $g_b$ becomes the Schr\"odinger equation
\begin{align}
\label{sKlsc_PDE_2}
L_{\kappa}(\vphi)=-\Delta \vphi+\frac{(2n-1)(n-1)}{2(n\kappa+1-2n)^2}\Big(S(g_b)-2\kappa\cdot S_C(g_b)\Big)\vphi=0,
\end{align}
which has real spectrum bounded below.
Since the conformal factor \eqref{conformal_factor_change} cannot vanish, the sKlsc problem here becomes the question of whether there exists a scaling constant $\kappa\neq \frac{2n-1}{n}$ for which there is a positive solution to \eqref{sKlsc_PDE_2}.  Since the manifold is compact, recall from Section \ref{schrodinger_background}, that this is exactly the question of whether or not there exists some scaling constant $\kappa$ for which the operator associated to the corresponding Schr\"odinger equation \eqref{sKlsc_PDE_2} has zero as its lowest eigenvalue.  Thus, the sKlsc problem is transformed into a $1$-parameter family of eigenvalue problems.  

We will denote the lowest eigenvalue of the operator $L_{\kappa}$ by $\lambda_0(\kappa)$.
Observe that, if the conformal class contains a K\"ahler representative, $g_k$, then the sKlsc equation \eqref{sKlsc_PDE_2} based at the K\"ahler metric is just $-\Delta \vphi+\frac{(1-\kappa)(2n-1)(n-1)}{(n\kappa+1-2n)^2}S_C(g_k)\vphi=0$, since $S(g_k)=2S_C(g_k)$.  In this case, the $1$-parameter family of Scr\"odinger equations is determined by a scaling in $\kappa$ of the fixed potential $S_C(g_k)$, and the problem can be studied in a more systematic way via the theory developed in \cite{Dabkowski-Lock_Schrodinger}.  However, if the conformal class does not contain a K\"ahler representative, then not only is there a scaling in $\kappa$ on the potential coming from the coefficient $\frac{(2n-1)(n-1)}{(n\kappa+1-2n)^2}$, but there is also a warping of the potential by $\kappa$ in the $S(g)-2\kappa\cdot S_C(g)$ component since $S(g)\neq 2S_C(g)$ for all metrics in the confomal class.  While this warping leads to certain analytic complications, it also allows for proving certain existence results that are not possible to completely guarantee in the case where the conformal class contains a K\"ahler representative.

It is interesting to notice the similarity between the sKlsc equation \eqref{sKlsc_PDE_2}, particularly in the case that the conformal class contains a K\"ahler representative, and the Lichnerowicz-Weitzenb\"ock formula in spin geometry, see \cite{Lichnerowicz}.  Also, note that the change of variables \eqref{conformal_factor_change} is reminiscent of the Cole-Hopf transformation for Burgers' equation, see \cite{Evans}.

The remainder of the proof separates by the Gauduchon degree of the conformal class as the behavior of the sKlsc equation, and the nature of the sKlsc question itself, differs depending upon whether it is positive or negative.  The negative and positive Gauduchon degree cases are addressed below in Section \ref{negative_g_deg} and Section \ref{positive_g_deg} respectively.

%This is because the degenerate scaling constant, $\kappa=\frac{2n-1}{n}$, lives in admissible range for negative Gauduchon degree which, in turn, causes the question of existence to depend significantly upon the behavior of $S/2-\kappa S_C$ as a function on the manifold around $\kappa=\frac{2n-1}{n}$.

\section{Schr\"odinger operators with scaled and warped potentials}
\label{lack_stability}

\label{lack_of_stability}
In Section~\ref{sKlsc_equation}, we saw that the sKlsc equation can be transformed into a $1$-parameter family of Schr\"odinger equations whose operators are of the form
\begin{align}
\label{warpedoperators}
L_{t}(\vphi)=-\Delta\vphi+f(t)\big(\mathcal{V}_1+h(t)\mathcal{V}_2\big)\vphi,
\end{align}
where $\mathcal{V}_1,\mathcal{V}_2:M\rightarrow \RR$ are fixed potential functions on the $M$, and $f,h:(a,b)\rightarrow\RR$ are functions in the parameter $t\in(a,b)\subset\RR$ which scale and warp the fixed potentials respectively. 
For the sKlsc equation $\mathcal{V}_1=S(g)$, $\mathcal{V}_2=S_C(g)$, $f=\frac{(2n-1)(n-1)}{2(nk+1-2n)^2}$ and $h=2\kappa$ as the parameter $\kappa$ ranges over the admissible range of scaling constants in each case.
The parameter $t$ is used in this section, as opposed to $\kappa$, to distinguish the work here from the specific sKlsc question.

While it is the sKlsc equation which led to the consideration of such families of operators, the lack of stability that we examine here is interesting in a more general setting and is thus addressed from the perspective of a compact Riemannian manifold of arbitrary dimension.  Certainly the behavior of the lowest eigenvalue of families of operators of the form \eqref{warpedoperators} depends upon qualities of the fixed potentials $\mathcal{V}_1$ and $\mathcal{V}_2$, as well as qualities of the scaling and warping functions $f(t)$ and $h(t)$.  While it is of course impossible to characterize the behavior of these eigenvalues for completely arbitrary such functions, given some reasonable assumptions of their qualities, one can often do this using standard analysis of the Rayleigh quotient in some cases or the work of the authors in \cite{Dabkowski-Lock_Schrodinger}, mentioned in Theorem \ref{Schrodinger_f_thm} above.

However, there are several cases of very reasonable assumptions on the functions in the operators \eqref{warpedoperators} for which the behavior of the lowest eigenvalue cannot be easily classified.  One such case arises in our study of the sKlsc question, and is addressed in \eqref{not_stable_sKlsc} of Section \ref{no_K_rep_neg} below.  Many cases of the general problem  actually reduce to the same thing analytically, and what this difficulty amounts to is a lack of stability in the behavior of the lowest eigenvalue.  For simplicity, we state this analytic situation in terms that can be directly applied to the sKlsc problem.

Let $L_t(\cdot)$, where $a<t<b$, be a $1$-parameter family of operators of the form \eqref{warpedoperators} on a Riemannian manifold $(M,g)$, and denote the lowest eigenvalue by $\lambda_0(t)$.  Assume that the scaling and warping functions
\begin{align}
\begin{split}
\label{scale_warp_conditions}
f,h:&(a,b)\subset\RR\rightarrow \RR^+\phantom{=}\text{are positive, continuous and satisfy}\\
&\text{(i)}\phantom{\text{iii}}\lim_{t\rightarrow a}f(t)=C_{f(a)}<\infty\phantom{=}\text{and} \phantom{=}  \lim_{t\rightarrow b}f(t)=\infty\\
&\text{(ii)}\phantom{\text{ii}}\lim_{t\rightarrow a}h(t)=C_{h(a)}<\infty\phantom{=}\text{and}\phantom{=}\lim_{t\rightarrow b}h(t)=C_{h(b)}<\infty\\
&\text{(iii)}\phantom{\text{i}}\text{$h(t)$ is strictly monotone increasing on the interval $(a,b)$},
\end{split}
\end{align}
and that the fixed potentials $\mathcal{V}_1,\mathcal{V}_2\in C^{\infty}(M)$ satisfy
\begin{align}
\begin{split}
\label{fixed_potentials}
%\mathcal{V}_1,\mathcal{V}_2&\in C^{\infty}(M)\phantom{=}\text{satisfy}\\
&\text{(i)}\phantom{\text{ii}}\mathcal{V}_1\leq \mathcal{V}_2\\
&\text{(ii)}\phantom{\text{i}}\int_M \mathcal{V}_1+C_{h(a)}\mathcal{V}_2<0.
\end{split}
\end{align}

From \eqref{fixed_potentials} part (ii) and the family of Rayleigh quotients associated to these operators \eqref{Rayleigh}, it is clear that for a range of $t$ close enough to $a$, the lowest eigenvalue of $L_t$ is strictly negative.  
If $\mathcal{V}_1+C_{h(b)}\mathcal{V}_2>0$ everywhere on the manifold, then from \eqref{Rayleigh} the lowest eigenvalue for a range of $t$ close enough to $b$ is strictly positive, and thus by continuity there exists a $t^*\in (a,b)$ for which $\lambda_0(t^*)=0$.  

Now, what we ask is the following:  What if $\mathcal{V}_1+C_{h(b)}\mathcal{V}_2\geq0$
with equality at least at one point? (It is, of course, assumed this is not identically zero.)    In the case that at every point $p\in M$ where $\mathcal{V}_1(p)+C_{h(b)}\mathcal{V}_2(p)=0$, the fixed potentials both vanish themselves, i.e. $\mathcal{V}_1(p)=\mathcal{V}_2(p)=0$, then the same argument as when $\mathcal{V}_1+C_{h(b)}\mathcal{V}_2>0$ holds to show that the behavior of the lowest eigenvalue is the same as in that situation.  

However, if at even one such point $p\in M$ this is not the case, in other words $\mathcal{V}_1(p)+C_{h(b)}\mathcal{V}_2(p)=0$ but neither $\mathcal{V}_1(p)$ nor $\mathcal{V}_2(p)$ vanish, then for any $\epsilon>0$
\begin{align}
\label{goesnegative}
\mathcal{V}_1(p)+h(b-\epsilon)\mathcal{V}_2(p)<0
\end{align}
since $h(t)$ is strictly monotone increasing and the fixed potentials satisfy \eqref{fixed_potentials}.
Although this now changes sign, for small enough such $\epsilon>0$
\begin{align}
\label{stillhaspositiveintegral}
\int_M\mathcal{V}_1+h(b-\epsilon)\mathcal{V}_2>0,
\end{align}
and therefore, by Remark \ref{negative_eigenvalue}, the lowest eigenvalue $\lambda_0(b-\epsilon)<0$.  Thus, it is not possible to guarantee the existence of a zero lowest eigenvalue using continuity and perturbative methods.  These perturbative methods, though, do not allow one to understand or exercise any control over the behavior of the lowest eigenvalue for $t\in(a,b)$ away from the ends of this interval.  Furthermore, there is no increasing condition on $f(t)$ and a scaling function which shrinks the entire potential away from the endpoints of $(a,b)$ is certainly admissible.  Thus, perhaps there are zero and positive lowest eigenvalues there.    We will show that this can never happen, and that $\lambda_0(t)<0$ for all $t\in(a,b)$.

\begin{theorem}
\label{scaled_warped_thm}
On a compact Riemannian manifold consider the $1$-parameter family of Schr\"odinger operators
\begin{align*}
L_t=-\Delta+f(t)\big(\mathcal{V}_1+h(t)\mathcal{V}_2\big),
\end{align*}
for $t\in(a,b)$, where the potentials satisfy \eqref{scale_warp_conditions} and \eqref{fixed_potentials}.  If $\mathcal{V}_1+C_{h(b)}\mathcal{V}_2\geq0$
and there exists a point $p\in M$ at which
\begin{align*}
\mathcal{V}_1(p&)+C_{h(b)}\mathcal{V}_2(p)=0\\
\text{where}\phantom{====}\\
\mathcal{V}_1(p)&\neq0\phantom{ii}\text{and}\phantom{ii} \mathcal{V}_2(p)\neq0, 
\end{align*}
then the lowest eigenvalue of $L_t$ is strictly negative for all $t\in(a,b)$.
\end{theorem}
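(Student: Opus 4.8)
The plan is to show that $\lambda_0(t)<0$ for every $t\in(a,b)$ by producing, at each such $t$, a test function with strictly negative Rayleigh quotient \eqref{Rayleigh}. Write $W(t,\cdot):=\mathcal{V}_1+h(t)\mathcal{V}_2$, so $L_t=-\Delta+f(t)W(t,\cdot)$, and set $s(t):=C_{h(b)}-h(t)$, which is positive on $(a,b)$ since $h$ strictly increases to $C_{h(b)}$. First record that whenever $\int_M W(t,\cdot)<0$ the constant function already has negative energy, so $\lambda_0(t)<0$. Now split on the sign of $\int_M\mathcal{V}_2$. If $\int_M\mathcal{V}_2\le 0$, then monotonicity of $h$ together with \eqref{fixed_potentials}(ii) gives $\int_M W(t,\cdot)=\int_M\mathcal{V}_1+h(t)\int_M\mathcal{V}_2\le\int_M(\mathcal{V}_1+C_{h(a)}\mathcal{V}_2)<0$ for all $t$, and we are done. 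If $\int_M\mathcal{V}_2>0$, then $m(t):=\int_M W(t,\cdot)$ is strictly increasing, with $m(a^+)=\int_M(\mathcal{V}_1+C_{h(a)}\mathcal{V}_2)<0$ and $m(b^-)=\int_M(\mathcal{V}_1+C_{h(b)}\mathcal{V}_2)>0$ (the last integral is positive because $\mathcal{V}_1+C_{h(b)}\mathcal{V}_2\ge 0$ is not identically zero), so there is a unique $t_0$ with $m(t_0)=0$; the constant test function handles $(a,t_0)$, and all of the work lies on the interval $[t_0,b)$.

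On $[t_0,b)$ I would build a family of test functions concentrated at $p$. From $\mathcal{V}_1(p)=-C_{h(b)}\mathcal{V}_2(p)$, the inequality $\mathcal{V}_1(p)\le\mathcal{V}_2(p)$, and $C_{h(b)}>0$, one gets $(1+C_{h(b)})\mathcal{V}_2(p)\ge 0$, hence $\mathcal{V}_2(p)>0>\mathcal{V}_1(p)$. So there is a geodesic ball $B_{r_0}(p)$ on which $\mathcal{V}_2\ge\tfrac12\mathcal{V}_2(p)$; and since $W_b:=\mathcal{V}_1+C_{h(b)}\mathcal{V}_2$ is smooth, nonnegative, and vanishes at $p$, it has a critical point there and vanishes to order at least two, so after shrinking $r_0$ we have $W_b\le A\,d(\cdot,p)^2$ on $B_{r_0}(p)$. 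Writing $W(t,\cdot)=W_b-s(t)\mathcal{V}_2$ this gives, on $B_{r_0}(p)$, the bound $W(t,\cdot)\le A\,d(\cdot,p)^2-\tfrac12 s(t)\mathcal{V}_2(p)$: a negative well at $p$ of depth $\asymp s(t)$ and radius $\asymp\sqrt{s(t)}$. Taking $\varphi_t=\eta\big(d(\cdot,p)/r_t\big)$, with $\eta$ a fixed cutoff equal to $1$ on $[0,\tfrac12]$ and $0$ on $[1,\infty)$, and $r_t=\sqrt{s(t)\mathcal{V}_2(p)/(4A)}$, so that $W(t,\cdot)\le-\tfrac14 s(t)\mathcal{V}_2(p)$ on $\operatorname{supp}\varphi_t$, the standard bump estimates $\int_M|\nabla\varphi_t|^2\asymp r_t^{-2}\int_M\varphi_t^2$ and $\int_M W(t,\cdot)\varphi_t^2\le-\tfrac14 s(t)\mathcal{V}_2(p)\int_M\varphi_t^2$ yield a Rayleigh quotient $\lesssim r_t^{-2}-c\,f(t)s(t)\asymp C_1/s(t)-c\,f(t)s(t)$, which is strictly negative exactly when $f(t)\,s(t)^2$ exceeds a threshold depending only on $A$, $\mathcal{V}_2(p)$ and $\dim M$. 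If $W_b$ vanishes at $p$ to higher order the choice of $r_t$ can be adjusted to lower the threshold, and if $W_b$ vanishes at several points one superposes such bumps.

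The delicate point — the ``loss of stability'' flagged before the theorem, genuine because $f$ is not assumed monotone, so one cannot merely perturb negativity inward from the endpoints — is that the whole statement now reduces to the single inequality $f(t)\,s(t)^2\gtrsim 1$ on $[t_0,b)$, and this is where the interdependence of $f$ and $h$ imposed by the underlying problem must be used (for the sKlsc operators $f(t)s(t)^2$ is a dimensional constant, so the estimate applies uniformly along the whole family). To organize the borderline range I would argue by contradiction: if $\lambda_0(t_1)=0$ with positive ground state $\varphi_1$ solving $-\Delta\varphi_1=f(t_1)W(t_1,\cdot)\varphi_1$, then integrating this identity against $1$, $\varphi_1$, and $\varphi_1^{-1}$ forces $\int_M W(t_1,\cdot)\varphi_1=0$, $\int_M W(t_1,\cdot)\varphi_1^2=-f(t_1)^{-1}\int_M|\nabla\varphi_1|^2<0$, and $\int_M W(t_1,\cdot)=f(t_1)^{-1}\int_M|\nabla\varphi_1|^2\varphi_1^{-2}>0$; hence $\varphi_1$ is non-constant, $t_1>t_0$, and $W(t_1,\cdot)$ has precisely the well of the previous step at $p$, so applying the test-function estimate at $t=t_1$ contradicts $\lambda_0(t_1)=0$ as soon as the threshold is met. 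The main obstacle is therefore concentrated in that last balancing: bounding below the gradient energy of a test function forced to concentrate at $p$ as the well shrinks, against the well's depth, using the nonnegativity and order of vanishing of $\mathcal{V}_1+C_{h(b)}\mathcal{V}_2$ at $p$ together with the rate at which $f$ blows up as $h\to C_{h(b)}$.
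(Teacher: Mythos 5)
Your reduction is sound up to the point you yourself flag: the constant test function handles every $t$ with $\int_M\mathcal{V}_1+h(t)\mathcal{V}_2<0$, and the entire content of the theorem is the remaining range where this integral is nonnegative. There, the concentration argument does not close, and the obstruction is not merely technical. Your bump at $p$ gives a negative Rayleigh quotient only when $f(t)\,\big(C_{h(b)}-h(t)\big)^2$ exceeds a threshold of order $A/\mathcal{V}_2(p)^2$, where $A$ is the constant in $\mathcal{V}_1+C_{h(b)}\mathcal{V}_2\le A\,d(\cdot,p)^2$, and nothing in \eqref{scale_warp_conditions} couples the rate at which $f(t)\to\infty$ to the rate at which $h(t)\to C_{h(b)}$: taking for instance $f(t)=(b-t)^{-1}$ and $C_{h(b)}-h(t)=b-t$ makes $f(t)\big(C_{h(b)}-h(t)\big)^2\to 0$, so near $b$ your well is too shallow for its width and the estimate fails. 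Even in the sKlsc specialization, where the product is indeed the constant $2(2n-1)(n-1)/n^2$, there is no reason that particular number clears a threshold depending on the Hessian of $\mathcal{V}_1+C_{h(b)}\mathcal{V}_2$ at $p$ and on the value $\mathcal{V}_2(p)$; your closing contradiction argument only restates the same test-function estimate and does not remove this dependence. So the proposal has a genuine gap precisely where the theorem is hardest.

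The paper's proof avoids all pointwise analysis at $p$ through a rigidity observation: replacing $\mathcal{V}_2$ by $\mathcal{V}_2+s$ changes the potential by the \emph{constant} $f(t)h(t)s$, hence leaves the ground state unchanged and translates the lowest eigenvalue by exactly $f(t)h(t)s$. For each $s>0$ the shifted potential is strictly positive for $t$ near $b$, so the shifted family attains a zero ground-state energy at some $t^*(s)$, and these $t^*(s)$ move monotonically in $s$; if the unperturbed family ($s=0$) also had a zero ground-state energy at some $t^*(0)$, it would have to coincide with a $t^*(s)$ for some $s>0$, contradicting the exact translation formula $\lambda_0(t,s)-\lambda_0(t,0)=f(t)h(t)s$. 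To salvage your route you would have to add a hypothesis coupling $f$ and $h$ (for example, $\liminf_{t\to b}f(t)\big(C_{h(b)}-h(t)\big)^2$ large relative to the second-order vanishing of $\mathcal{V}_1+C_{h(b)}\mathcal{V}_2$ at its zeros), which the theorem does not assume.
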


\begin{proof}
The proof of this amounts to showing a lack of stability in the behavior.  Recall, from the above discussion, that the lowest eigenvalues for $t$ close enough to $a$ or $b$ are negative.  However, the perturbative arguments used to see this do not extend to the interior of this regime.  Consider the $2$-parameter family of operators
\begin{align}
L_{(t,s)}=-\Delta+f(t)\Big(\mathcal{V}_1+h(t)\big(\mathcal{V}_2+s\big)\Big),
\end{align}
where $\mathcal{V}_1,\mathcal{V}_2$ and $f(t),g(t)$ satisfy the conditions of the theorem, and $s$ is a constant ranging over $-\infty<s<\infty$.
Denote the lowest eigenvalue of $L_{(t,s)}$ by $\lambda_0(t,s)$ and the associated eigenfunction by $\vphi_{(t,s)}$.  We can assume $\vphi_{(t,s)}>0$ and that $||\vphi_{(t,s)}||_2=1$ so these eigenfunctions are unique.

We begin by examining how the behavior of the lowest eigenvalue changes with $s$.
Observe that, since
\begin{align}
\begin{split}
\lambda_0(t,s)\vphi_{(t,s)}=&-\Delta\vphi_{(t,s)}+f(t)\Big(\mathcal{V}_1+h(t)\big(\mathcal{V}_2+s\big)\Big)\vphi_{(t,s)}\\
=&\Big[-\Delta\vphi_{(t,s)}+f(t)\Big(\mathcal{V}_1+h(t)\mathcal{V}_2\Big)\vphi_{(t,s)}\Big]+h(t)f(t)s\cdot\vphi_{(t,s)}\\
=&\Big(\lambda_0(t,0)+h(t)f(t)s\Big)\vphi_{(t,s)},
\end{split}
\end{align}
for a fixed $t\in(a,b)$ the eigenfunctions corresponding to the lowest eigenvalues for all $s$ are in fact equal, i.e. for any $s_1,s_2\in(-\infty,\infty)$ the associated eigenfunctions $\vphi_{(t,s_1)}=\vphi_{(t,s_2)}$, and furthermore that  the lowest eigenvalues are related by
\begin{align}
\label{eigendifference}
\lambda_0(t,s_1)-\lambda_0(t,s_2)=f(t)h(t)(s_1-s_2).
\end{align}
Therefore, since $f(t)$ and $h(t)$ are positive,
\begin{align}
\label{lambda_derivative}
\frac{\partial}{\partial s}\big(\lambda_0(t,s)\big)>0.
\end{align}

Now, for any $s>0$,
\begin{align}
\mathcal{V}_1+C_{h(b)}\big(\mathcal{V}_2+s\big)>0.
\end{align}
Therefore there exists a range of $\epsilon>0$ for which $\lambda_0(b-\epsilon,s)>0$, which in turn guarantees the existence of some $t\in(a,b)$ at which the lowest eigenvalue is zero.  We will see eventually that there is a unique such $t$, but at this point there is still the possibility of several, and we denote the greatest of these by $t^*(s)$.  In other words, for any $s>0$ the lowest eigenvalue $\lambda_0(t^*(s),s)=0$ and $t^*(s)$ is the greatest value in $(a,b)$ for which this is true.  From \eqref{lambda_derivative} we see, while $s>0$, that 
\begin{align}
\frac{d}{d s}\big(t^*(s)\big)<0,
\end{align}
so as $s$ increases the largest value of $t\in(a,b)$ for which the lowest eigenvalue is zero decreases.  In a sense, what we find is that these $t^*(s)$ are foliating the regime $(a,b)$.

Finally, suppose that there exists a $t^*(0)\in(a,b)$ for which the lowest eigenvalue of $L_{(t,0)}$ is zero. Notice that $L_{(t,0)}$ is precisely the $1$-parameter family of Schr\"odinger operators in Theorem \ref{scaled_warped_thm}.  Then, since $(a,b)$ is follated by the $t^*(s)$ for $s>0$, there must be some particular $s>0$ for which $\lambda_0(t^*(0),0)=\lambda_0(t^*(s),s)=0$ and $t^*(0)=t^*(s)$.  However, from \eqref{eigendifference}, we see that this cannot occur.

\end{proof}

\section{Negative Gauduchon degree case}
\label{negative_g_deg}
Recall, from \eqref{kapparestrict}, that the admissible regime of scaling constants when the conformal class has negative Gauduchon degree is $\kappa\in (1,\infty)$.  The degenerate scaling constant $\kappa=\frac{2n-1}{n}$ separates this into two disjoint intervals $(1,\frac{2n-1}{n})\sqcup(\frac{2n-1}{n},\infty)$ and, in turn,
%\in (1,\frac{2n-1}{n})\sqcup(\frac{2n-1}{n},\infty)$.
%Since this range of possible scaling constants is composed of two disjoint intervals, 
the sKlsc question here separates into two problems by restricting the sKlsc equation \eqref{sKlsc_PDE_2} to each of these intervals.
The study of these questions further separates depending upon whether the conformal class does or does not contain a K\"ahler representative.  Interestingly, the difference primarily lies in the range of scaling constants $\kappa\in(1,\frac{2n-1}{n})$.  The sKlsc question is answered in the non-degenerate setting in Section \ref{no_K_rep_neg} and Section \ref{exists_K_rep_neg}, and is addressed for the degenerate scaling constant in Section \ref{sKlsc_(2n-1)/n}.

\subsection{When $[g]$ does not contain a K\"ahler representative}
\label{no_K_rep_neg}
Consider, first, the sKlsc question in the case that there exists a balanced representative, $g_b$, of the conformal class, but no K\"ahler representative.  Base the family of sKlsc equations at this balanced metric, so the non-self-adjoint term vanish, and they take the form of \eqref{sKlsc_PDE_2}.  The result makes use of the continuity of the lowest eigenvalue of the family of operators $L_{\kappa}$ and relies upon a detailed analysis of the behavior of these eigenvalues throughout the admissible regime of scaling constants $(1,\frac{2n-1}{n})\sqcup(\frac{2n-1}{n},\infty)$, most importantly, around the ends of these open intervals.  The behavior is straightforward to understand for $\kappa$ ``close enough'' to $1$ and $\infty$, and is discussed forthwith.  However,
the analysis is quite subtle for $\kappa$ in a neighborhood of the degenerate scaling constant.

For $\kappa$ near $1$, since $\int_M S(g_b)< 2\int_M S_C(g_b)<0$, there exists an $\epsilon^*>0$ small enough so that, for all $\epsilon\in(0,\epsilon^*)$, the perturbation
\begin{align*}
 \int_MS(g_b)-2(1+\epsilon) S_C(g_b)<0.
 \end{align*}
Therefore, the lowest eigenvalue of the operators $L_{1+\epsilon}$ satisfy $\lambda_0(1+\epsilon)<0$.
Similarly, any $\kappa>>\frac{2n-1}{n}$ large enough ensures that $\int_M S(g_b)-2\kappa\cdot S_C(g_b)>0$.  Then, since $\int_M S(g_b)-2\kappa\cdot S_C(g_b)$ is an increasing function of $\kappa$ and 
\begin{align}
\lim_{\kappa\mapsto\infty}\frac{(2n-1)(n-1)}{2(n\kappa+1-2n)^2}\Big(S(g_b)-2\kappa \cdot S_C(g_b)\Big)=0,
\end{align}
there must exist some range of scaling constants $(\kappa^*,\infty)\subset(\frac{2n-1}{n},\infty)$ for which the operators $L_{\kappa}$ have lowest eigenvalues $\lambda_0(\kappa)>0$ for all $\kappa\in(\kappa^*,\infty)$.  This follows from Theorem \ref{Schrodinger_f_thm} and Remark \ref{negative_eigenvalue} if $S_C(g_b)$ changes sign on the manifold, and directly from examining the Rayleigh quotient if $S_C(g_b)$ is non-positive.

The behavior of the lowest eigenvalues for the operators $L_{\kappa}$ around the degenerate scaling constant $\kappa=\frac{2n-1}{n}$ depends upon certain qualities of $S(g_b)-2(\frac{2n-1}{n}) S_C(g_b)$ as a function on the manifold.  The question first separates into cases depending upon whether $\int_MS(g_b)-2(\frac{2n-1}{n})S_C(g_b)$ is positive, zero or negative.
\begin{enumerate}
\item
When $\int_MS(g_b)-2(\frac{2n-1}{n})S_C(g_b)>0$, small perturbations of the scaling constant preserve positivity, i.e. for $\epsilon>0$ small enough 
\begin{align*}
\int_MS(g_b)-2\Big(\frac{2n-1}{n}\pm\epsilon\Big)S_C(g_b)>0.
\end{align*}
The behavior of the lowest eigenvalue separates this case into three subcases depending upon if $S-2(\frac{2n-1}{n})S_C$ changes sign, is strictly positive or is non-negative and equals zero at least once on the manifold.\\

\begin{enumerate}
\item
\label{critical_sign_change} If $S(g_b)-2(\frac{2n-1}{n})S_C(g_b)$ changes sign on the manifold, then for a range of $\epsilon>0$ small enough, $S(g_b)-2(\frac{2n-1}{n}\pm\epsilon)S_C(g_b)$ changes sign on the manifold as well and, from Remark \ref{negative_eigenvalue}, we see that $\lambda_0(\frac{2n-1}{n}\pm\epsilon)<0$.  Thus, recalling $\lambda_0(\kappa>>\frac{2n-1}{n})>0$ for $\kappa$ large enough,
there is a scaling constant  $\kappa\in(\frac{2n-1}{n},\infty)$ for which a solution to the sKlsc problem exists.
\begin{remark}
{\em In the regime $(1,\frac{2n-1}{n})$ a solution is not guaranteed to exist since the lowest eigenvalues for scaling constants close to $1$ and $\frac{2n-1}{n}$ are negative.  However, it is certainly possible for there to be a solution in this regime as, for $\epsilon_1>0$ and $\epsilon_2>0$ small enough, $\lambda_0(1+\epsilon_1)<0$ due to the fact that $\int_M S(g_b)-2(1+\epsilon_1)S_C(g_b)<0$ for , while $\lambda_0(\frac{2n-1}{n}-\epsilon_2)<0$ follows from Remark \ref{negative_eigenvalue}
because $S(g_b)-2(\frac{2n-1}{n}-\epsilon_2)S_C(g_b)$ changes sign while $\int_M S(g_b)-2(1+\epsilon_2)S_C(g_b)>0$, and $\lim_{\epsilon_2\mapsto 0}\frac{1}{(n(\frac{2n-1}{n}-\epsilon_2)+1-2n)^2}=\infty$.}
\end{remark}

\item
\label{strict_critical_val}
If $S(g_b)-2(\frac{2n-1}{n})S_C(g_b)>0$, then for a range of $\epsilon>0$ small enough, 
\begin{align*}
S(g_b)-2\Big(\frac{2n-1}{n}\pm \epsilon\Big)S_C(g_b)>0
\end{align*}
as well, so $\lambda_0(\frac{2n-1}{n}\pm\epsilon)>0$.  Thus, recalling that $\lambda_0(1+\epsilon)<0$ for $\epsilon>0$ small enough, there is a scaling constant  $\kappa\in(1,\frac{2n-1}{n})$ for which a solution to the sKlsc problem exists.  
However, there are no solution to the sKlsc problem for scaling constants in the regime $(\frac{2n-1}{n},\infty)$ since $S(g_b)-2\kappa\cdot S_C(g_b)$ is an increasing function, and therefore positive, on this entire range.\\

\item 
\label{not_stable_sKlsc}
Suppose $S(g_b)-2(\frac{2n-1}{n})S_C(g_b)\geq0$, with $S(g_b)|_p-2(\frac{2n-1}{n})S_C(g_b)|_p=0$ at least at one point $p\in M$.  If at each such point $p\in M$, this is zero because $S(g_b)|_p=S_C(g_b)|_p=0$, the same argument as in \eqref{strict_critical_val} above shows that there is a scaling constant  $\kappa\in(1,\frac{2n-1}{n})$ for which a solution to the sKlsc problem exists, and that there are no solution to the sKlsc problem for scaling constants in the regime $(\frac{2n-1}{n},\infty)$.  However, if at any point $p\in M$ at which this vanishes neither of the scalar curvatures vanish, in other words
\begin{align*}
S(g_b)|_p-2\Big(&\frac{2n-1}{n}\Big)S_C(g_b)|_p=0\\
\text{but}\phantom{========}&\\
S(g_b)|_p\neq0&\phantom{ii}\text{and}\phantom{ii}S_C(g_b)|_p\neq0,
\end{align*}
 then there are no solutions to the sKlsc problem for scaling constants in the union of the regimes $ (1,\frac{2n-1}{n})\sqcup(\frac{2n-1}{n},\infty)$.  This is seen for the regime $(\frac{2n-1}{n},\infty)$ since $S(g_b)-2\kappa\cdot S_C(g_b)$ is an increasing function of $\kappa$, and therefore non-negative on this entire range, so $\lambda_0(\kappa)>0$ here.  For the regime of scaling constants $(1,\frac{2n-1}{n})$ the non-existence follows immediately from Theorem \ref{scaled_warped_thm}, which concerns the lack of stability for Schr\"odinger operators with such a warping and scaling of the potential.\\
\end{enumerate}

\item When $\int_MS(g_b)-2(\frac{2n-1}{n})S_C(g_b)=0$, then
\begin{align*}
\int_M S(g_b)-2\kappa \cdot S_C(g_b)\phantom{i}
\begin{cases}
<0\phantom{=}\text{for $\kappa<\frac{2n-1}{n}$}\\
>0\phantom{=}\text{for $\kappa>\frac{2n-1}{n}$}
\end{cases}.
\end{align*}
Thus, there are no solutions to the sKlsc problem for $\kappa\in(1,\frac{2n-1}{n})$ as $\lambda_0(\kappa)<0$ over this entire regime.  The question of existence for $\kappa\in(\frac{2n-1}{n},\infty)$ depends upon whether or not $S(g_b)-2(\frac{2n-1}{n})S_C(g_b)$ changes sign.\\

\begin{enumerate}
\item If $S(g_b)-2(\frac{2n-1}{n})S_C(g_b)$ changes sign, by the same argument as in \eqref{critical_sign_change}, there is a scaling constant $\kappa\in(\frac{2n-1}{n},\infty)$ for which a solution to the sKlsc problem exists.\\

\item  If $S(g_b)-2(\frac{2n-1}{n})S_C(g_b)$ does not change sign, then it is necessarily identically zero
since it integrates to zero.  Therefore, the balanced metric is an sKlsc metric with $\kappa=\frac{2n-1}{n}$.  Furthermore, this is the unique sKlsc metric, up to scale, in the conformal class.\\
\end{enumerate}

\item When $\int_MS(g_b)-2(\frac{2n-1}{n})S_C(g_b)<0$, for a range of $\epsilon>0$ small enough
\begin{align*}
\int_M S(g_b)-2\Big(\frac{2n-1}{n}+\epsilon\Big)S_C(g_b)<0
\end{align*}
as well, so $\lambda_0(\frac{2n-1}{n}+\epsilon)<0$.  Thus, recalling that $\lambda_0(\kappa>>\frac{2n-1}{n})>0$, there is a scaling constant  $\kappa\in (\frac{2n-1}{n},\infty)$ for which a solution to the sKlsc problem exists. However, there is no solution for any $\kappa\in (1,\frac{2n-1}{n})$ as $\int_M S(g_b)-2\kappa\cdot S_C(g_b)<0$, so $\lambda_0(\kappa)<0$ for all scaling constants in this regime.  \\
\end{enumerate}

\subsection{When $[g]$ contains a K\"ahler representative}
\label{exists_K_rep_neg}
In the case that there does exist a K\"ahler representative, $g_k$, of the conformal class, recall that the family of sKlsc equations based at this metric simplifies to 
$-\Delta \vphi+\frac{(1-\kappa)(2n-1)(n-1)}{(n\kappa+1-2n)^2}S_C(g_k)\cdot \vphi=0$, since for a K\"ahler metric $S(g_k)=2S_C(g_k)$.  This can be viewed as a $1$-parameter family of Schr\"odinger equations with a scaling of a fixed potential, $S_C(g_k)$, in the parameter $\kappa$.  In this situation, the sKlsc problem is exactly the same question for both of the regimes $\kappa_1\in(1,\frac{2n-1}{n})$ and $\kappa_2\in(\frac{2n-1}{n},\infty)$ as there is a bijective correspondence between given by the operators restricted to these given by setting 
\begin{align}
\label{F_1=F_2}
\frac{(\kappa_1-1)(2n-1)(n-1)}{(n\kappa_1+1-2n)^2}=\frac{(\kappa_2-1)(2n-1)(n-1)}{(n\kappa_2+1-2n)^2},
\end{align}
and solving to extract the relationship
\begin{align}
\label{k_1,k_2}
(\kappa_1-1)(\kappa_2-1)=\frac{(n-1)^2}{n^2}
\end{align}
between $\kappa_1$ and $\kappa_2$ in their respective domains.  Then, solving for either, observe that the derivative with respect to the other is nonvanishing in the appropriate regime.  Therefore, a solution to the problem in one regime guarantees a solution in the other since the operators $L_{\kappa_1}=L_{\kappa_2}$ for each unique pair of scaling constants $\kappa_1\in(1,\frac{2n-1}{n})$ and $\kappa_2\in(\frac{2n-1}{n},\infty)$ that satisfy \eqref{k_1,k_2}.  It is easy to check that $L_{\kappa_2}=-\Delta +\frac{(1-\kappa_2)(2n-1)(n-1)}{(n\kappa_2+1-2n)^2}S_C(g_k)$ satisfies the conditions of Theorem \ref{Schrodinger_f_thm}, and thus the existence and uniqueness results of Theorem \ref{negative_Gauduchon_Kahler_specific} part \eqref{sKlsc_existence} follow accordingly.    The relationship between the conformal factors follows from \eqref{k_1,k_2} and \eqref{conformal_factor_change}.

\begin{remark}
{\em Although it is the same positive function $\vphi$ that provides the solution to the sKlsc equation in both the regimes $(1,\frac{2n-1}{n})$ and $(\frac{2n-1}{n},\infty)$, and this function is unique up to scale, it is important to note that the actual conformal factors \eqref{conformal_factor_change} used to obtain the sKlsc metric for each of the scaling constants, $\kappa_1$ and $\kappa_2$, are different powers of $\vphi$, and thus the behavior of the resulting scalar curvatures, while both being sKlsc with their respective scaling constants, can differ greatly.
}
\end{remark}

\subsection{The degenerate scaling constant}
\label{sKlsc_(2n-1)/n}
Recall that, when $\kappa=\frac{2n-1}{n}$, the sKlsc equation \eqref{sKlsc_PDE_1} degenerates into the first order non-linear autonomous Hamilton-Jacobi equation 
\begin{align}
\label{degenerate_equation_1}
|\nabla f|^2-\frac{1}{n-1}\langle df, \theta\rangle_{\omega}=\frac{S(g)-2(\frac{2n-1}{n})S_C(g)}{2(2n-1)(n-1)}.
\end{align}
A solution sought for here is not required to be positive as this formulation of the sKlsc equation is with respect to the conformal factor $e^{-2f}$.  There are a variety of likely obstructions to the existence of sKlsc metrics with this scaling constant.

By basing \eqref{degenerate_equation_1} at the Gauduchon metric, $g_G$, an integrating, we find that there is an immediate obstruction to the existence of non-K\"ahler sKlsc metrics unless the Gauduchon degree is negative since $\int_M|\nabla f|^2-\frac{1}{n-1}\langle df, \theta\rangle_{\omega}=\int_M|\nabla f|^2\geq0$, because the Lee form is co-closed, and 
\begin{align}
\int_MS(g_G)-2\Big(\frac{2n-1}{n}\Big)S_C(g_G)>0\phantom{=}\text{only if}\phantom{=}\int_M S_C(g_G)<0.
\end{align}

If the conformal class contains a balanced representative, \eqref{degenerate_equation_1} becomes 
\begin{align}
\label{degenerate_equation_2}
|\nabla f|^2=\frac{S(g_b)-2(\frac{2n-1}{n})S_C(g_b)}{2(2n-1)(n-1)}=\frac{(\frac{1-n}{n})S_C(g_b)-\frac{1}{4}|T(g_b)|^2}{(2n-1)(n-1)},
\end{align}
see \cite[Theorem 1.6]{Michelsohn},
and observe that there is an obstruction to the existence of non-K\"ahler sKlsc metrics unless $S_C(g_b)$ is non-positive and not identically zero.  Also, notice that for there to possible exist a solution, $S(g_b)-2(\frac{2n-1}{n})S_C(g_b)$ must vanish at least twice on the manifold.

If the conformal class contains a K\"ahler representative, $g_k$, the degenerate sKlsc equation \eqref{degenerate_equation_2} becomes 
\begin{align}
\label{degenerate_equation_3}
|\nabla f|^2=-\frac{S_C(g_k)}{n(2n-1)},
\end{align}
then there is the further obstruction that $S_C(g_k)$ must equal zero at least at two points on the manifold to the existence of sKlsc metrics.  This is due to the fact that, if there exists a solution $f$, it cannot be constant and must attain distinct maximums and minimums on the manifold.  On the other hand, recall from Section \ref{negative_g_deg}, if $S_C(g_k)$ is non-positive and not identically zero and the conformal class contains a K\"ahler representative, then this degenerate scaling constant $\kappa=\frac{2n-1}{n}$ is the only scaling constant for which there could possibly exist non-K\"ahler sKlsc metrics.

\begin{remark}
\label{hamilton_jacobi_rmk}
{\em
Although there are many compact balanced manifolds with non-positive Chern scalar curvature, and with $S(g_b)-2(\frac{2n-1}{n})S_C(g_b)$  vanishing at least twice on the manifold, it is rare for classical solutions to equations such as \eqref{degenerate_equation_2} to exist, see \cite{Fathi}.  Interestingly, due to the sparseness of solutions for particular values of $\kappa$ limiting to $\frac{2n-1}{n}$, it is even rare for viscosity solutions (with lower regularity) to exist as in \cite{Delanoe}.
}
\end{remark}

\section{Positive Gauduchon degree case}
\label{positive_g_deg}
Recall, from \eqref{kapparestrict}, that the admissible regime of scaling constants when the conformal class has positive Gauduchon degree is $\kappa\in(-\infty,1)$.  For similar reasons as in the negative Gauduchon degree case, the study of the sKlsc question here separates depending upon whether the conformal class does or does not contain a K\"ahler representative.  It is interesting to note, that there is a more stark contrast here between existence results each of these cases than in the negative Gauduchon degree setting.

\subsection{When $[g]$ does not contain a K\"ahler representative}
Consider, first, the sKlsc question in the case that there exists a balanced representative, $g_b$, of the conformal class, but no K\"ahler representative.  Base the family of sKlsc equations at this balanced metric, so the non-self-adjoint term vanish, and they take the form of \eqref{sKlsc_PDE_2}.  Since 
\begin{align}
\int_M S(g_b)-2 S_C(g_b)<0\phantom{=}\text{and}\phantom{=} \int_M S_C(g_b)>0,
\end{align}
there exists an $\epsilon^*>0$ small enough so that, for all $\epsilon\in(0,\epsilon^*)$, the perturbation 
\begin{align}
\int_M S(g_b)-2(1-\epsilon)S_C(g_b)<0.
\end{align}
Therefore, the lowest eigenvalue of the operators $L_{1-\epsilon}$ satisfy $\lambda_0(1-\epsilon)<0$.  Next, observe that any $\kappa<<0$ negative enough ensures that $\int_M S(g_b)-2\kappa \cdot S_C(g_b)>0$, and also that 
\begin{align}
\lim_{\kappa\mapsto-\infty}\frac{(2n-1)(n-1)}{2(n\kappa+1-2n)^2}\Big(S(g_b)-2\kappa\cdot S_C(g_b)\Big)=0.
\end{align}
Therefore, there must exist some range of scaling constants $(-\infty, \kappa^*)\subset(-\infty,1)$ for which the operators $L_{\kappa}$ have lowest eigenvalues $\lambda_0(\kappa)>0$ for all $\kappa\in(-\infty, \kappa^*)$.  
This follows from Theorem \ref{Schrodinger_f_thm} if $S_C(g_b)$ changes sign on the manifold, and directly from examining the Rayleigh quotient if $S_C(g_b)$ is non-negative.  Thus a solution to the sKlsc problem with scaling constant in the regime $\kappa\in(\infty,\frac{2n-1}{n})$ is guaranteed.

\subsection{When $[g]$ contains a K\"ahler representative}
\label{postive_gauduchon_sometimes}
In the case that there does exist a K\"ahler representative, $g_k$, of the conformal class, 
recall that the family of sKlsc equations based at this metric simplifies to $-\Delta \vphi+\frac{(1-\kappa)(2n-1)(n-1)}{(n\kappa+1-2n)^2}S_C(g_k)\cdot \vphi=0$, since for a K\"ahler metric $S(g_k)=2S_C(g_k)$.
By integrating this equation, observe that no non-K\"ahler solution to the sKlsc problem exists if $S_C(g_k)\geq 0$ since $\vphi>0$.  Hence, we now assume that $S_C(g_k)$ changes sign on the manifold.  However, it is not analytically possible to exercise the same control over the behavior of the lowest eigenvalue for scaling constants close to $1$, in other words for $\kappa\in(1-\epsilon,1)$ for small $\epsilon>0$, as in situation where the conformal class does not contain a K\"ahler metric.  This is because here the the potential function will always have positive average.  Examining the multiplier of $S_C(g_k)$ in the potential function, we find that
\begin{align}
\label{max_h(kappa)}
\max_{\kappa<1}\frac{(1-\kappa)(2n-1)(n-1)}{(n\kappa+1-2n)^2}=\frac{2n-1}{4n},
\end{align}
where this maximum is achieved at $\kappa=\frac{1}{n}$.  
Therefore, even if $S_C(g_k)$ changes sign, the sKlsc problem here cannot be formulated in a way, analogous to the proof of the negative Gauduchon degree case above as to satisfy the conditions of Theorem~\ref{Schrodinger_f_thm} since the multiplier on the potential function, which is a function of $\kappa<1$, is bounded.

More specifically, while Theorem \ref{t-schrodinger_cor_II} holds to guarantee the existence of open intervals of scaling constants, $\kappa<1$, near both $1$ and $-\infty$ for which the lowest eigenvalue of
\begin{align}
\label{remarks_operator}
-\Delta+\frac{(1-\kappa)(2n-1)(n-1)}{(n\kappa+1-2n)^2} S_C(g_k)
\end{align}
is strictly positive, since \eqref{max_h(kappa)} is bounded it is not possible to ensure the existence of a member of this family with negative lowest eigenvalue which is necessary in the proof of Theorem~\ref{Schrodinger_f_thm} as it relies on continuity of the lowest eigenvalue in the parameter $\kappa$.  However, Theorem \ref{t-schrodinger_cor_II} can be used as an obstruction to the occurrence of a negative lowest eigenvalue, for if $\frac{2n-1}{4n}$, which recall is the maximum of the multiplier on the potential, satisfies the inequality
\begin{align}
\frac{2n-1}{4n}\leq\frac{\int_M S_C(g_k)}{P||S_C(g_k)||_{\infty}\big(4Vol(M)||S_C(g_k)||_{\infty}+\int_M S_C(g_k)\big)},
\end{align}
then the lowest eigenvalue of this family of Schr\"odinger operators is always positive. 

Note though, this does not preclude the existence of a scaling constant $\kappa<1$ for which the lowest eigenvalue is negative or zero.  In fact, in certain instances we know that non-K\"ahler solutions to sKlsc problem here do exist.  This is seen when $\kappa=\frac{2n-1}{n^2}$, as the sKlsc equation \eqref{sKlsc_PDE_2} becomes
\begin{align}
\label{R-scalar-flat}
-\Delta \vphi+\Big(\frac{n-1}{2n-1}\Big)S_C(g_k)\cdot\vphi=0,
\end{align}
and the existence of a positive solution to this is equivalent to the existence of a Riemannian scalar-flat metric in the conformal class of $g_k$ (recall that $S(g_k)=2S_C(g_k)$ and the real dimension is $2n$).  Note, though, if there does exist a positive solution $\vphi$ to \eqref{R-scalar-flat}, since the conformal factor to obtain the sKlsc metric would be $\vphi^{\frac{2}{n}}$, recall \eqref{conformal_factor_change}, the resulting sKlsc metric would not be Riemannian-scalar-flat as the conformal factor to obtain that metric would be $\vphi^{\frac{2}{n-1}}$.
\begin{remark}
{\em Interestingly, from this along with \eqref{sKlsc_equation_integral}, it is easy to see that if there is a K\"ahler manifold, of complex dimension $n\geq 2$, that is conformal to a non-K\"ahler Riemannian-scalar-flat metric, then the conformal class necessarily has positive Gauduchon degree.
}
\end{remark}
While there do exist solutions to the sKlsc problem in certain instances when the scalar curvature of the initial K\"ahler metric changes sign and satisfies $\int_M S_C(g_k)>0$, it is not at all clear whether solutions should exist for every such manifold, much less the number of solutions that may exist.  In fact, the authors believe that there do exist compact K\"ahler manifolds, satisfying these scalar curvature conditions, which admit no conformal non-K\"ahler sKlsc metrics.  This is loosely reminiscent of the positive Gauduchon degree case for the Chern Yamabe problem \cite{Angella-Simone-Spotti}.

\bibliography{sKlsc_references}

\end{document}